\newtheorem{remark}{Remark}
\newtheorem{theorem}{Theorem}
\newtheorem{corollary}[theorem]{Corollary}
\newtheorem{lemma}[theorem]{Lemma}
\newtheorem{proposition}[theorem]{Proposition}
\newcommand{\VV}{\mathbb{V}}
\newcommand{\cA}{\mathcal{A}}
\newcommand{\cG}{\mathcal{G}}
\newcommand{\cO}{\mathcal{O}}
\newcommand{\cB}{\mathcal{B}}
\newcommand{\cF}{\mathcal{F}}
\newcommand{\cS}{\mathcal{S}}
\newcommand{\cT}{\mathcal{T}}
\newcommand{\HH}{\mathbb{H}}
\newcommand{\N}{\mathbb{N}}
\newcommand{\Z}{\mathbb{Z}}
\newcommand{\E}{\mathbb{E}}
\newcommand{\bbT}{\mathbb{T}}
\renewcommand{\Pr}{\mathbb{P}}
\newcommand{\1}{\mathbf{1}}
\def\eqd{\,{\buildrel d \over =}\,}
\newcommand{\zero}{\mathbf{0}}
\newcommand{\leD}{\le_{\text{st}}}
\newcommand{\geD}{\ge_{\text{st}}}
\newcommand{\bplus}{\boldsymbol{+}}
\newcommand{\bminus}{\boldsymbol{-}}
\DeclareMathOperator\dist{dist}
\newcommand{\iid}{i.i.d.}
\newcommand{\fiid}{fiid}
\newcommand{\ffiid}{ffiid}
\newcommand{\fvffiid}{fv-ffiid}
\newcommand{\ind}{\mathbf{1}}
\title[Finitary codings for the random-cluster and other infinite-range monotone models]{Finitary codings for the random-cluster model and other infinite-range monotone models}
\date{\today}
\author{Matan Harel}
\address{Matan Harel \hfill\break\indent Northeastern University, Boston, MA, United States of America.}
\email{m.harel@northeastern.edu}
\author{Yinon Spinka}
\address{Yinon Spinka \hfill\break\indent University of British Columbia,
    Department of Mathematics,
 	Vancouver, BC, V6T 1Z2, Canada.}
\email{yinon@math.ubc.ca}
\thanks{Research supported by Israeli Science Foundation grant 861/15, NSERC of Canada, the European Research Council starting grant 678520 (LocalOrder), the Adams Fellowship Program of the Israel Academy of Sciences and Humanities, and the Zuckerman Postdoctoral Scholars Fellowship}
\begin{document}

\begin{abstract}
        A random field $X = (X_v)_{v \in G}$ on a quasi-transitive graph $G$ is a factor of \iid\ if it can be written as $X=\varphi(Y)$ for some \iid\ process $Y= (Y_v)_{v \in G}$ and equivariant map $\varphi$. Such a map, also called a coding, is finitary if, for every vertex $v \in G$, there exists a finite (but random) set $U \subset G$ such that $X_v$ is determined by $\{Y_u\}_{u \in U}$. We construct a coding for the random-cluster model on $G$, and show that the coding is finitary whenever the free and wired measures coincide. This strengthens a result of H{\"a}ggstr{\"o}m--Jonasson--Lyons \cite{haggstrom2002coupling}. We also prove that the coding radius has exponential tails in the subcritical regime. As a corollary, we obtain a similar coding for the subcritical Potts model.

Our methods are probabilistic in nature, and at their heart lies the use of coupling-from-the-past for the Glauber dynamics. These methods apply to any monotone model satisfying mild technical (but natural) requirements. Beyond the random-cluster and Potts models, we describe two further applications -- the loop $O(n)$ model and long-range Ising models. In the case of $G = \Z^d$, we also construct finitary, translation-equivariant codings using a finite-valued \iid\ process~$Y$. To do this, we extend a mixing-time result of Martinelli--Olivieri~\cite{martinelli1994approach} to infinite-range monotone models on quasi-transitive graphs of sub-exponential growth. 
\end{abstract}

\maketitle

\section{Introduction and main results} \label{sec:intro}

Consider an infinite graph $G = (\VV,E)$ and a random field $X = (X_v)_{v \in \VV}$ whose distribution is invariant under all automorphisms of $G$. This paper is concerned with the question of existence of codings (factor maps): is it possible to express $X$ as an automorphism-equivariant function (which we call a coding) an \iid\ process --- i.e. a random field $Y = (Y_v)_{v \in \VV}$ where the $Y_v$'s are independent and identically distributed? The answer to this question depends on the graph $G$ and the random field $X$. The theory of such codings traces back to the seminal work of Ornstein~\cite{ornstein1970bernoulli} and later Keane and Smorodisnky~\cite{keane1979finitary}, who studied the case in which $G = \Z$ and $X$ itself is an \iid\ process. In this case, Keane and Smorodisnky showed that $X$ and $Y$ are finitarily isomorphic -- a stronger condition than the one we require. The study of the one-dimensional problem when $X$ is a more general process remains an active research topic.

In the setting of the $d$-dimensional lattice $\Z^d$, it is very natural to ask whether the Ising model is a factor of an \iid\ process. This model, perhaps the most well-known of the statistical physics models, is infamously trivial on $\Z$, but exhibits a phase transition on $\Z^d$ when $d \geq 2$ -- and hence, it is appropriate to study it on non one-dimensional lattices. In an unpublished work, Ornstein and Weiss~\cite{OW73} (see also~\cite{adams1992folner} for a published version) showed that the (infinite-volume) plus state of the Ising model at any positive temperature is a factor of an \iid\ process. Steif~\cite{steif1991} showed a similar but stronger result for monotone spin systems. In both of these cases, the factor maps may be infinitely dependent, in the sense that determining the value of $X$ at the origin may require knowing the value of infinitely many elements of $Y$. Van den Berg and Steif~\cite{van1999existence} showed that the subcritical Ising model has a {\em finitary} coding. Explicitly, they construct a factor map $\varphi$ from an \iid\ process~$Y$ to any subcritical Ising model such that $\varphi(Y)$ at the origin depends on a finite (but random) number of $Y_v$'s. In fact, their work quantifies the `amount of information' required to determine $\varphi(Y)$ at the origin in two ways. On the one hand, they show that there exists a coding whose coding radius, which controls how far one must look in the $Y$ process, has exponential tails.
On the other hand, they show that there exists a (different) coding which only requires a finite-valued input $Y_v$ at every vertex. The same work shows that no finitary coding can exist for the supercritical Ising model. Recent works constructed finitary codings for Markov random fields with spatial mixing properties~\cite{spinka2018finitarymrf}, or long-range interacting particle systems that satisfy a `high noise' condition~\cite{galves2008perfect}. 

The initial goal of this project was to show that the random-cluster model on $\Z^d$ is a finitary factor of an \iid\ process. Unlike the Ising model, the random-cluster model has infinite-range interactions -- i.e. the state of an edge in the random-cluster model may have a nonvanishing effect on the state of an edge that is arbitrarily far away from it. Although the methods we use yield more general results, the main result of this paper is the construction of a finitary coding for the random-cluster model on an arbitrary quasi-transitive graph when the free and wired measures coincide. In the case of the random-cluster model, the factor constructed in this paper is very similar to the one discussed by H{\"a}ggstr{\"o}m--Jonasson--Lyons~\cite{haggstrom2002coupling}. That paper constructs a factor map for the random-cluster model on a general quasi-transitive graph, but does not study whether it is finitary. The analysis presented herein also provides quantitative control of the coding radius of the factor. In particular, for the subcritical random-cluster model, the coding radius will have exponential tails. We further obtain results for the Potts model on such graphs using the Edwards--Sokal coupling. In the case of the subcritical random-cluster and Potts models on $\Z^d$, we also prove the existence of a finitary coding from a finite-valued \iid\ process.

%
%

The general framework discussed in this paper is that of monotone specifications. Specifications,  a formalization of concepts first introduced in the work of Dobrushin~\cite{Dobrushin1968TheDe} and Landford--Ruelle~\cite{lanford1969observables}, are families of finite-volume measures, indexed by finite subsets and arbitrary configurations (boundary conditions), that satisfy certain consistency relations. They are called monotonic (or attractive) if the measures respect a partial ordering on configurations, in the sense of stochastic domination; this property is a generalization of the FKG property of the random-cluster model, or Griffiths' inequalities for the Ising model. We emphasize that we will not demand that specifications are quasi-local (which is a well-known continuity property), which is frequently assumed elsewhere and violated by the random-cluster  and other infinite-range models. The generality of the framework has many possible applications. We discuss two additional applications: to the critical loop $O(n)$ model on the hexagonal lattice, and to subcritical long-range Ising models. As far as we are aware, this is the first finitary coding result for the loop $O(n)$ model and the only non-perturbative finitary coding result for long-range Ising models (see~\cite{gallo2014attractive} for a result at sufficiently high temperatures).

We end by briefly discussing the algorithmic aspects of our results. There is an extensive literature focused on perfect simulations of infinite-range models~\cite{galves2008perfect,galves2010perfect,de2012developments}; for an example involving the `high noise' regimes of the random-cluster model on $\Z^d$, see~\cite{de2016perfect}. The proofs given in this paper rely on the method of coupling-from-the-past of Propp and Wilson~\cite{propp1996exact}. This technique uses dynamics in order to get a perfect simulation of the stationary distribution of a finite-state Markov chain. 
In our setting, we apply coupling-from-the-past to the single-site Glauber dynamics (in the same spirit as previous works, e.g.,~\cite{van1999existence,haggstrom2000propp}).
As such, there is an interest in controlling not only the spatial dependence of the factor map (i.e. the coding radius), but also the mixing time, which measures the number of steps of the dynamics required to perfectly sample~$X$ at the origin. The celebrated work of Martinelli and Olivieri~\cite{martinelli1994approach} relates spatial and temporal mixing in the context of the finite-range, finite-energy, monotone models on $\Z^d$; as part of this work, we prove a generalization of this result to infinite-range, monotone models on quasi-transitive graphs of sub-exponential growth. With this perspective, it is clear that the existence of space-time finitary factors has algorithmic implications: one may create a perfect sample of $X$ on a finite subset of $\VV$ by applying the space-time finitary factor map to the \iid\ process $Y$ on some random (and possibly much larger) finite subset of $\VV$. Controlling the size of the latter set is tantamount to quantitative control on the coding radius and mixing time.

\subsection{Definitions}

Let $\VV$ be countably infinite and let $\Gamma$ be a group acting on~$\VV$.
The action is \emph{quasi-transitive} if it partitions $\VV$ into finitely many orbits.
Let $(S,\cS)$ and $(T,\cT)$ be two measurable spaces, and let $X=(X_v)_{v \in \VV}$ and $Y=(Y_v)_{v \in \VV}$ be $(S,\cS)$-valued and $(T,\cT)$-valued $\Gamma$-invariant random fields. For the rest of the paper, we will assume that all probability spaces are standard.

A \emph{coding} from $Y$ to $X$ is a measurable function $\varphi \, \colon\,  T^{\VV} \to S^{\VV}$, which satisfies $\varphi(Y) \eqd X$ and is \emph{$\Gamma$-equivariant}, i.e., commutes with the action of every element in~$\Gamma$ on a $\Gamma$-invariant subset of $T^{\VV}$ of full measure. Such a coding is also called a \emph{factor map} or \emph{homomorphism} from $Y$ to $X$; when such a coding exists, we say that $X$ is a \emph{$\Gamma$-factor} of $Y$.

Suppose that $G$ is a locally finite graph on vertex set $\VV$ and that $\Gamma$ acts quasi-transitively on~$\VV$ by automorphisms of $G$. Thus, $G$ is a quasi-transitive graph; heuristically, such a graph has finitely many `different types' of vertices (whereas a transitive graph has exactly one).
We say that a pair of configurations $y,y' \in T^{\VV}$ agree up to distance $r$ around a vertex $v$ if $y_w = y'_w$ for all $v$ with $\dist(v,w) \leq r$, where $\dist(\cdot,\cdot)$ denotes the graph distance. We say that $\varphi$ is determined at distance $r$ around $v$ at a configuration $y$ if $\varphi(y)_v = \varphi(y')_v$ for any $y'$ which agrees with $y$ up to distance $r$ around $v$. The \emph{coding radius} of $\varphi$ at the vertex $v$ and the configuration $y$, which we denote by $R_v(y)$, is the minimal distance that determines $\varphi$ at $v$ and $y$. It may happen that no such~$r$ exists, in which case, $R_v(y)=\infty$. Thus, associated to a coding is a random variable $R_v=R_v(Y)$ which describes the coding radius at $v$; a coding is called {\em finitary} if $R_v(Y)$ is almost surely finite for every $v \in \VV$.\footnote{Technically, the map $R_v(y)$ may not be measurable as defined. One may deal with this by either modifying $\varphi$ on a null set (with respect to $Y$) or by demanding instead that, for almost every $y$, $\varphi(y)_v=\varphi(y')_v$ for almost every $y'$ which agrees with $y$ up to distance $r$ around $v$.}
 
We categorize factor maps as follows: When $X$ is a $\Gamma$-factor of an \iid\ (independent and identically distributed) process, we say it is \emph{$\Gamma$-\fiid}, and when it is a finitary $\Gamma$-factor of an \iid\ process, we say it is \emph{$\Gamma$-\ffiid}. A still stronger notion is $\Gamma$-\fvffiid\, which requires $X$ to be a finitary $\Gamma$-factor of a \emph{finite-valued} \iid\ process (i.e., a finite set $T$). In addition, we can add a quantitative element which indicates how far a coding must look to determine the output at the origin. Explicitly, we say that a coding has \emph{exponential tails} if $\Pr(R_v \ge r) \le Ce^{-cr}$ for some $C,c>0$ and all $r\ge 0$ and $v \in \VV$, and that it has \emph{stretched-exponential tails} if $\Pr(R_v \ge r) \le Ce^{- r^{\nu}}$ for some $C>0$, $0<\nu <1$, and all $r \ge 0$ and $v \in \VV$. For the remains of the paper, when we use the notion \fiid\ (or any variant thereof) without an explicit mention of $\Gamma$,
the group may be taken to be any group acting quasi-transitively on $\VV$ by automorphisms of the graph.

\subsection{The random-cluster model}
We begin with a definition of the random-cluster model; for background on the model and its fundamental properties mentioned below, we direct the reader to the monographs \cite{grimmett2006random,Dum13}.

Let $G' = (V',E')$ be a finite subgraph of $G$, and let $\partial V'$ denote the set of vertices in $V'$ that have a neighbor in $\VV \setminus V'$.
The random-cluster measure in $G'$ with parameters $p\in[0,1]$ and $q>0$ and boundary conditions $i\in\{0,1\}$ is given by
\begin{equation*}
\phi_{G',p,q}^i(\omega)=\frac{p^{o(\omega)}(1-p)^{c(\omega)}q^{k^i(\omega)}}{Z^i(G',p,q)}, \qquad \omega \in \{0,1\}^{E'},
\end{equation*}
where $o(\omega)$ and $c(\omega)$ are the numbers of open and closed edges, i.e., edges $e$ such that $\omega_e=1$ and $\omega_e=0$, respectively, $k^0(\omega)$ is the number of open clusters in $\omega$, $k^1(\omega)$ is the number of open clusters that do not intersect the boundary $\partial V'$, and $Z^i(G',p,q)$ is a normalizing constant, called the partition function, which makes $\phi_{G',p,q}^i$ a probability measure.
We call the measures \emph{free} and \emph{wired} when $i=0$ and $i=1$, respectively.
It is well-known that, when $q \ge 1$, the random-cluster model has the FKG property (a monotonicity property), which implies that $\phi_{G',p,q}^i$ converges weakly to a limiting measure $\phi_{p,q}^i$ as $G'$ increases to $G$.
The two limiting measures are probability measures which are supported on $\{0,1\}^E$ and are invariant under all automorphisms of~$G$. We call $\phi_{p,q}^0$ and $\phi_{p,q}^1$ the free and wired infinite-volume random-cluster measures. Our results concern the coding properties of these two measures.

When $\phi_{p,q}^0=\phi_{p,q}^1$, we may omit the superscript for notational clarity and write $\phi_{p,q}$ for the common measure.
A standard coupling argument shows that, for each $i\in\{0,1\}$, there exists a critical parameter $p_c^i(q) \in [0,1]$ such that
\[
\phi^i_{p,q}[\exists \text{ an infinite cluster}] = \begin{cases} 0 & p < p_c^i(q) \\ 1 & p > p_c^i(q) \end{cases}.
\]
It is also a straightforward consequence of the FKG property that $p_c^0(q) \ge p_c^1(q)$, and that $\phi_{p,q}^0=\phi_{p,q}^1$ whenever $p<p_c^1(q)$. Furthermore, the well-known Burton--Keane argument~\cite{burton1989density} implies that $p_c^0(q) = p_c^1(q)$ on any amenable graph. 

We now state our first result about codings for the random-cluster model.
Recall that our definition of coding is stated for a random field on the vertex set of a graph. Thus, strictly speaking, when considering codings for the random-cluster model on $G$, one should think of the model as being defined on the vertices of the line graph of $G$. In this case, following our convention, the implicit $\Gamma$ may be taken to be any group acting quasi-transitively on $\VV$ by automorphisms of the line graph.

\begin{theorem}\label{thm:fk-coding}
Let $G$ be an infinite quasi-transitive graph and let $p \in [0,1]$ and $q \geq 1$.
\begin{itemize}
\item Both $\phi_{p,q}^0$ and $\phi_{p,q}^1$ are \fiid.
\item If $\phi_{p,q}^0=\phi_{p,q}^1$, then $\phi_{p,q}$ is \ffiid.
\item If $p < p_c^1(q)$, then $\phi_{p,q}$ is \ffiid\ with exponential tails.
\end{itemize}
\end{theorem}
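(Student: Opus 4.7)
The plan is to construct the coding via coupling-from-the-past (CFTP) applied to the single-site Glauber dynamics of the random-cluster model, driven by a space-time iid process $Y = (Y_{e,t})_{e \in E,\, t \in \Z}$. Each $Y_{e,t}$ will encode a uniform variable $U_{e,t} \in [0,1]$ together with any auxiliary randomness needed to choose update sites. The crucial observation is that the random-cluster update admits a monotone stochastic representation: if edge $e$ is updated using $U_{e,t}$, then the edge becomes open whenever $U_{e,t} \le p/(p + q(1-p))$, closed whenever $U_{e,t} > p$, and otherwise becomes open precisely when the endpoints of $e$ are already connected through the remaining configuration. Since connectivity is an increasing event, this update is monotone, so trajectories started from comparable initial data remain comparable under the common driving noise.

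For the first bullet, I would adapt the construction of H\"aggstr\"om--Jonasson--Lyons to this CFTP framework. For each finite subgraph $B_n \uparrow G$, run the monotone dynamics from time $-T$ up to $0$ inside $B_n$, starting from the all-closed and all-open configurations and using either free or wired boundary conditions outside. Sending $T,n \to \infty$ along an appropriate diagonal sequence, the minimum and maximum trajectories converge weakly to $\phi_{p,q}^0$ and $\phi_{p,q}^1$ respectively. Because all randomness is taken from the iid array $Y$ and the construction depends only on the combinatorial data of $G$, the resulting maps are automatically equivariant under any group acting quasi-transitively on $\VV$ by automorphisms, giving the desired \fiid\ codings.

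For the second bullet, the hypothesis $\phi_{p,q}^0 = \phi_{p,q}^1$ should force the sandwich between the minimum and maximum CFTP trajectories to coalesce at each fixed edge $e$ in finite time almost surely. Indeed, the marginals of the two trajectories at $e$ converge to $\phi_{p,q}^0$ and $\phi_{p,q}^1$, which now agree, so the probability that they disagree at time $0$ vanishes as the box and time horizon grow; monotonicity promotes this to almost sure eventual agreement. The time and space at which coalescence first occurs then determine a finite coding radius, and a careful measurability/exhaustion argument turns the CFTP output into a bona fide finitary factor map, producing the \ffiid\ claim.

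For the third bullet, the assumption $p < p_c^1(q)$ yields exponential decay of wired connection probabilities, and I would use this to quantify the coalescence argument above. The key observation is that a disagreement between the min and max trajectories at $e$ at time $0$ after running CFTP in a box of radius $r$ should be dominated, via the monotone coupling, by the existence of an open $\phi_{p,q}^1$-path from an endpoint of $e$ to the box boundary, which has probability $\le C e^{-cr}$. The main obstacle, and the place where most care is needed, is precisely the infinite-range nature of the random-cluster update: the update at $e$ depends on connectivity through arbitrarily many other edges, so the naive ``one box suffices'' bound is invalid, and one must iterate the coalescence statement through a nested sequence of boxes, bookkeeping the conditional events along the way so that the exponential rate survives. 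This step also interfaces with the Martinelli--Olivieri-style spatial-to-temporal mixing generalization advertised in the abstract, which is needed to control how long the dynamics must run before the local coalescence event becomes likely.
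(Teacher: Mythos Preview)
Your overall strategy---CFTP for the single-site Glauber dynamics, driven by an iid process and exploiting the monotone update rule---is exactly the paper's approach, and your first two bullets track the paper closely. The paper packages this into an abstract framework (monotone upwards-downwards specifications) and then reads off the random-cluster result as a special case, but the underlying engine is the same sandwich argument you describe.

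Your treatment of the third bullet, however, reflects a misconception about where the difficulty lies. You write that the naive ``one box suffices'' bound is invalid because of the infinite-range update, and that one must iterate through nested boxes and invoke Martinelli--Olivieri-type temporal mixing. In fact the one-box bound \emph{is} valid, and no temporal mixing result is needed here at all. The point is that for \ffiid\ (as opposed to \fvffiid), the iid process at each edge may take values in an infinite space---in particular, one may take $Y_e = (Y_{e,n})_{n \ge 0}$ to be an entire \iid\ sequence. The finite-volume dynamics in the ball of radius $r$ with wired (resp.\ free) boundary, run for \emph{infinitely} many steps, depends only on $(Y_e)_{e \in F_{e,r}}$ and converges almost surely to a sample $\sigma^{+,e,r}$ (resp.\ $\sigma^{-,e,r}$) of the finite-volume wired (resp.\ free) measure $\rho^{\bplus}_{F_{e,r}}$ (resp.\ $\rho^{\bminus}_{F_{e,r}}$). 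The coding radius at $e$ is then at most the first $r$ for which $\sigma^{+,e,r}_e = \sigma^{-,e,r}_e$, so
\[
\Pr(R_e > r) \le \Pr\big(\sigma^{+,e,r}_e \neq \sigma^{-,e,r}_e\big) \le \big\|\rho^{\bplus}_{F_{e,r}}(\sigma_e \in \cdot) - \rho^{\bminus}_{F_{e,r}}(\sigma_e \in \cdot)\big\|_{TV},
\]
and a standard monotone coupling bounds the right-hand side by the wired connection probability from $e$ to the boundary, which decays exponentially by sharpness of the phase transition. The infinite-range nature of the specification is irrelevant here: once boundary conditions are fixed on $F_{e,r}^c$, the dynamics inside $F_{e,r}$ is a finite-state chain using only the noise in $F_{e,r}$. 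The Martinelli--Olivieri generalization is used in the paper only for the \emph{finite-valued} coding result (where one cannot afford infinitely many time steps per site), not for Theorem~\ref{thm:fk-coding}.
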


The second and third items are new, while the first item was already known; see the discussion in Section \ref{sec:discussion} for further details.

In the amenable case, we can also prove a partial converse for the second item in Theorem~\ref{thm:fk-coding}.
This converse works on any amenable graph for the wired measure; we require a slightly stronger version of amenability for the free measure.
We say that $G$ is {\em amenable} if there exists a sequence $(F_n)_n$ of non-empty finite subsets of $\VV$ such that $|\partial F_n|/|F_n| \rightarrow 0$ as $n \to \infty$.
We say that $G$ is {\em c-amenable} if there exists a sequence $(F_n,H_n)_n$ of non-empty finite subsets of $\VV$ such that $H_n$ is connected, $\partial F_n \subset H_n$ and $|H_n|/|F_n| \to 0$ as $n\to\infty$. The `c' in c-amenable stands for connected. It is clear that c-amenability implies amenability.

\begin{theorem}\label{cor:Converse}
Let $G$ be an infinite quasi-transitive graph and let $p \in [0,1]$ and $q \ge 1$.
\begin{itemize}
 \item If $G$ is amenable and $\phi_{p,q}^0 \neq \phi_{p,q}^1$, then $\phi_{p,q}^1$ is not \ffiid.
 \item If $G$ is c-amenable and $\phi_{p,q}^0 \neq \phi_{p,q}^1$, then $\phi_{p,q}^0$ is not \ffiid.
\end{itemize}
\end{theorem}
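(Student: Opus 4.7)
The plan is to derive both non-existence results from a single tension between finitariness and the DLR equations of the random-cluster model. Assume for contradiction that $\phi^1_{p,q}$ (respectively $\phi^0_{p,q}$) is \ffiid\ via a finitary coding $X=\varphi(Y)$ with $Y$ i.i.d. The strategy combines (c-)amenability with finitariness to show that the distribution of $X$ on a deep interior of a large Følner set is only weakly affected by a carefully engineered boundary event, while DLR forces the \emph{opposite} conclusion: conditioning on that event moves the interior distribution from the $\phi^1_{p,q}$-type to the $\phi^0_{p,q}$-type (or vice versa). If $\phi^0_{p,q}\neq\phi^1_{p,q}$ the two statements are incompatible.

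For the wired case, use $\Gamma$-invariance and finitariness to choose, for any $\delta>0$, a radius $R=R(\delta)$ with $\Pr(R_v>R)<\delta$ at every vertex~$v$. Pick a Følner set $F$ with $|\partial F|/|F|$ tiny, and distinguish a deep interior $F_{\mathrm{in}} := \{v\in F : \dist(v,\VV\setminus F)>CR\}$ for a large constant $C$, a further-nested finite target $A'\subset F_{\mathrm{in}}$ with $\dist(A',F\setminus F_{\mathrm{in}})>2R$, and the annular buffer $A:=F\setminus F_{\mathrm{in}}$. Define $\mathcal{E}$ to be the event that every edge with an endpoint in $A$ is closed: then $\mathcal{E}$ severs $F_{\mathrm{in}}$ from $\VV\setminus F$ in the open subgraph, so by DLR $\Pr[X|_{E(F_{\mathrm{in}})}\in\cdot \mid \mathcal{E}]=\phi^0_{F_{\mathrm{in}},p,q}$, which converges to $\phi^0_{p,q}|_{E(A')}$ as $F\nearrow\VV$. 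On the other hand, on the good event $G$ that every vertex within distance $R$ of $A'\cup A$ has coding radius $\le R$, the random variable $X|_{E(A')}$ and the indicator $\mathbf{1}_{\mathcal{E}}$ are measurable functions of $Y$ on disjoint subsets of $\VV$, hence exactly independent on~$G$. Provided $\Pr(G^c\cap\mathcal{E})$ is negligible relative to $\Pr(\mathcal{E})$, the conditional law $\Pr[X|_{E(A')}\in\cdot \mid \mathcal{E}]$ matches the unconditional marginal $\phi^1_{p,q}|_{E(A')}$, yielding the contradiction $\phi^0_{p,q}=\phi^1_{p,q}$.

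The free case is analogous but $\mathcal{E}$ must enforce \emph{wired} rather than free boundary conditions on $F_{\mathrm{in}}$: it is the event that the edges of a spanning tree of the subgraph induced on $H\setminus F$ (chosen to touch every vertex of $\partial F$) are all open. The annular set $H$ supplied by c-amenability is connected, of size $o(|F|)$, and contains $\partial F$, so such a spanning event has positive probability; DLR then gives the wired finite-volume measure as the conditional law of $X|_{E(F_{\mathrm{in}})}$, and the same independence-on-$G$ argument produces $\phi^0_{p,q}=\phi^1_{p,q}$. The role of c-amenability (as opposed to mere amenability) is precisely to allow the formation of such a connected circuit joining all of $\partial F$ while keeping $|H\setminus F|$ small; under plain amenability the boundary could fragment into many components, and no single cluster could realize the wired identification.

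The main obstacle is that $\mathcal{E}$ has probability exponentially small in $|\partial F|$, so naive total-variation decoupling bounds of the form $|\Pr(U\cap\mathcal{E})-\Pr(U)\Pr(\mathcal{E})|<\eta$ become useless once divided by $\Pr(\mathcal{E})$. The remedy is to establish independence on $G$ in a multiplicative sense: on $G$ the relevant $\sigma$-algebras are supported on disjoint slices of the i.i.d.\ field $Y$ and therefore factor exactly, so $\Pr[X|_{E(A')}\in\cdot \mid \mathcal{E}\cap G]$ equals the conditional marginal of $X|_{E(A')}$ given $G$. The decisive estimate is then that $\Pr(G^c\cap\mathcal{E})\ll\Pr(\mathcal{E})$, which one approaches by using FKG for the random-cluster model to compare $\Pr(G^c\mid\mathcal{E})$ with $\Pr(G^c)$, and then choosing $\delta=\delta(F)$ exponentially small in $|\partial F|$. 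Verifying that this choice is compatible with the finitariness of $\varphi$ (i.e., that the implicit $R(\delta)$ from the coding remains finite and the buffer $A$ remains of negligible size compared to $F$ along a Følner sequence) is the most delicate part of the argument, and is where the quantitative balancing between decoupling, $\mathcal{E}$, and the Følner structure must be carried out.
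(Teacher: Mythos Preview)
Your approach has a genuine gap at exactly the point you flag as ``the most delicate part.'' You need $\Pr(G^c\cap\mathcal{E})\ll\Pr(\mathcal{E})$, where $\Pr(\mathcal{E})$ is exponentially small in $|\partial F|$ (or in $|H|$ for the free case). But finitariness gives you no quantitative tail bound whatsoever on the coding radius: for a general \ffiid\ process, $\Pr(R_v>r)$ can decay arbitrarily slowly. Hence if you try to take $\delta$ exponentially small in $|\partial F|$, the resulting $R(\delta)$ could be much larger than the diameter of $F$, making the buffer $A$ swallow all of $F$ and leaving $F_{\mathrm{in}}$ empty. No balancing along a F\o lner sequence can fix this without an \emph{a priori} rate for $\Pr(R_v>r)$, which you do not have. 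Your proposed use of FKG does not help either: $G^c$ is an event about the i.i.d.\ process $Y$ (through the coding radii), not a monotone event in the random-cluster configuration $\omega$, so FKG for $\phi^1_{p,q}$ says nothing about $\Pr(G^c\mid\mathcal{E})$.

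The paper's proof sidesteps this entirely by invoking a general fact about \ffiid\ processes (due to Bosco--Frigessi--Machado): for any \ffiid\ field the averages $Z_n=\frac{1}{|F_n|}\sum_{e\in F_n}\omega_e$ satisfy a large-deviation bound $\phi^1_{p,q}(Z_n\le a)\le Ce^{-c|F_n|}$ along any F\o lner sequence. This supplies the needed exponential rate directly, without ever manipulating coding radii. The proof then uses DLR plus monotonicity to show that the boundary condition $\omega_{\partial F_n}\equiv 0$ forces $Z_n\le a$ with probability bounded away from zero (since under the free measure the mean of $Z_n$ is strictly below $a$), hence $\phi^1_{p,q}(\omega_{\partial F_n}=0)\le Ce^{-c|F_n|}$; this contradicts the finite-energy lower bound $\phi^1_{p,q}(\omega_{\partial F_n}=0)\ge c_0^{|\partial F_n|}$ once $|\partial F_n|/|F_n|\to 0$. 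The free case replaces $\partial F_n$ by the connected set $H_n$ from c-amenability and the all-closed event by all-open on $H_n$, which enforces wired boundary conditions. In short, the exponential ergodic theorem for \ffiid\ processes is the missing ingredient that converts ``finitary'' into the quantitative bound your argument needs.
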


Thus, for c-amenable graphs, Theorem~\ref{thm:fk-coding} and Theorem~\ref{cor:Converse} imply that the wired random-cluster measure is \ffiid\ if and only if the free random-cluster measure is \ffiid, both of which hold if and only if the two measures coincide. In particular, this is the case for $\Z^d$, as it is c-amenable for any $d \ge 2$, and the free and wired are always equal on $\Z$.
We do not know whether the first item of the theorem holds for non-amenable graphs, nor do we know whether the second item holds under the standard amenability assumption. However, the second item does not necessary hold in the non-amenable case. Indeed, on the $d$-regular tree, H{\"a}ggstr{\"o}m~\cite{haggstrom1996random} shows that $p_c^1(q) < p_c^0(q)$ for any $q>2$ and $d \geq 3$, and thus, $\phi_{p,q}^0 \neq \phi_{p,q}^1$ for any $p \in (p_c^1(q), p_c^0(q))$; meanwhile, on any tree and for any $p$ and $q$, $\phi_{p,q}^0$ is exactly Bernoulli percolation of parameter $p/[p + q(1-p)]$, and thus is trivially \ffiid.

The next result is concerned with the existence of codings from a finite-valued \iid\ process for the subcritical random-cluster model on $\Z^d$. In this case, $p_c^0(q) = p_c^1(q)$, so that we may drop the superscript. 
\begin{theorem}\label{thm:fk-fv}
Let $d \geq 2$, $G=\Z^d$ and $\Gamma$ be the translation group of $\Z^d$. Let $q \geq 1$ and $p < p_c(q)$. Then $\phi_{p,q}$ is $\Gamma$-\fvffiid\ with stretched-exponential tails. 
\end{theorem}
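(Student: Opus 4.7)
We implement coupling-from-the-past (CFTP) for the single-site Glauber dynamics of $\phi_{p,q}$ using a finite-valued \iid\ input, and quantify convergence via the extension of Martinelli--Olivieri's mixing-time bound to infinite-range monotone models on quasi-transitive graphs of sub-exponential growth (the result announced in the abstract). Because $\Z^d$ has polynomial growth and is c-amenable, and because $p<p_c(q)=p_c^0(q)=p_c^1(q)$, this extension yields a bound $T_{\mathrm{mix}}(L)\leq \mathrm{poly}(L)$ on the Glauber mixing time in boxes of side $L$, alongside the exponential decay of connectivity probabilities.

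Take $Y=(Y_v)_{v\in\Z^d}$ to be \iid\ Bernoulli$(\tfrac{1}{2})$. Using a translation-equivariant interleaving, assign to each triple $(e,t,k)$ consisting of an edge $e$, a time $t$, and a digit index $k$, a site $w(e,t,k)\in\Z^d$, and let the $k$-th binary digit of a uniform $[0,1]$-valued variable $U_{e,t}$ be $Y_{w(e,t,k)}$. For any Glauber update at edge $e$, the acceptance threshold $p_e(\omega)$ takes only two values, depending on whether the endpoints of $e$ are connected in $\omega\setminus\{e\}$; hence, almost surely, only a finite, geometrically-tailed number of binary digits of $U_{e,t}$ need be inspected to determine the update, yielding a translation-equivariant, finite-valued realization of each Glauber step.

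Run CFTP on a growing sequence of boxes $B_L$ centered at $v$. By the extended mixing-time bound, coalescence on $B_L$ within $T_L=\mathrm{poly}(L)$ steps occurs with probability at least $1-e^{-cL}$; by subcritical exponential decay, the output at $v$ is independent of the boundary condition on $\partial B_L$ with probability at least $1-e^{-cL}$. Let $L^*$ be the smallest $L$ for which both events occur, so that $\Pr(L^*>L)\leq Ce^{-cL}$. CFTP on $B_{L^*}$ consumes $\mathrm{poly}(L^*)$ update variables, whose defining bits come from a region of radius at most $\mathrm{poly}(L^*)$; therefore $R_v\leq \mathrm{poly}(L^*)$. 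Combining with the tail of $L^*$ gives
\[
\Pr(R_v\geq r)\leq C e^{-c r^{\nu}} \qquad \text{for some } \nu\in(0,1),
\]
which is the claimed stretched-exponential tail. The main obstacle is the infinite-range character of the random-cluster interaction: the acceptance threshold at $e$ depends on connectivities throughout $\omega$, so a naive implementation of CFTP would be non-local. This is precisely what the extended Martinelli--Olivieri result, combined with subcritical exponential decay of connectivities, allows us to localize to the box $B_{L^*}$; translation-equivariance of the full coding is then inherited from the interleaving that builds the $U_{e,t}$.
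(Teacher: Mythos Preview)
Your overall strategy---CFTP for single-site Glauber dynamics, the observation that the single-edge conditional takes only two values, and the combination of the Martinelli--Olivieri-type mixing bound with subcritical exponential decay---matches the paper's in spirit. The gap is in the passage to a genuinely finite-valued \iid\ input.

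Your interleaving step cannot work as written. A translation-equivariant map $w \colon E(\Z^d)\times\N\times\N \to \Z^d$ must satisfy $w(e+u,t,k)=w(e,t,k)+u$ for every translation $u$, hence is determined by its values at a fixed base edge. For the bits $Y_{w(e,t,k)}$ to be independent you need $w$ injective; but for any two distinct triples $(i,t,k)\neq(i',t',k')$ at the base, the translate by $w(0,i',t',k')-w(0,i,t,k)$ produces a collision, so no equivariant injection exists. Without injectivity the $U_{e,t}$ are not independent and the CFTP distributional claims fail. What your construction actually yields (before the interleaving) is a \emph{space-time} finitary coding from the process $(Y_{e,t})_{e,\,t\ge 0}$, and converting that into an honest \fvffiid\ coding is a separate, non-trivial step.

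This is exactly how the paper proceeds. It first uses marginal-finiteness (your ``two thresholds'' observation) and a finite-alphabet ordering scheme to build a space-time finitary coding whose space-time radius has exponential tails; this is where the extended Martinelli--Olivieri bound enters. It then invokes a black-box result from~\cite{spinka2018finitaryising} (Proposition~\ref{prop:coding} here) that converts any space-time finitary coding with exponential tails and finite expected information into a \fvffiid\ coding with stretched-exponential tails. That proposition is precisely the missing ingredient replacing your interleaving, and its proof is where the drop from exponential to stretched-exponential tails is incurred. Your argument would be complete if you invoked that result or supplied one of comparable strength.
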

We note that the coding we produce above is {\em translation}-equivariant, not \emph{automorphism}-equivariant. The construction we use does not produce a reflection/rotation-equivariant coding, though we believe that such a construction should be possible, and that a similar statement should hold for more general quasi-transitive graphs of sub-exponential growth and their full automorphism group. The proof of Theorem~\ref{thm:fk-fv} relies on a result from~\cite{spinka2018finitaryising} and on a new mixing-time result for a natural single-site dynamics of the subcritical random-cluster model on an arbitrary infinite quasi-transitive graph of sub-exponential growth (see Section~\ref{sec:mixing}).

\subsection{The Potts model}
The random-cluster model is closely related to the Potts model. This model, introduced by Potts \cite{Pot52} following a suggestion of his adviser Domb, has been the subject of intensive study by mathematicians and physicists over the last three decades. For a review of the physics results, see \cite{wu1982potts}; for proofs of the classical rigorous results quoted below, see~\cite{georgii2001random}.

Set $q \geq 2$ to be an integer. The Potts measure on a finite subgraph $G' = (V',E')$ of $G$, at inverse temperature $\beta$ and boundary conditions $i\in\{0,1,\dots,q\}$, is defined by
\begin{equation}\label{eq:Gibbs}
\mu_{G',\beta,q}^i[\sigma]:=\frac{e^{\beta {\bf H}^i_{G'}(\sigma)}}{Z^i_{G',\beta,q}}, \qquad \sigma\in\{1,\dots,q\}^{V'},
\end{equation}
where 
\begin{equation*}
	{\bf H}_{G'}^i(\sigma):=\sum_{\{x,y\}\in E'}\ind[\sigma_x=\sigma_y] + \,\sum_{\{x,y\}\in \partial E'}\ind[\sigma_x=i],
\end{equation*}
and $Z^i_{G',\beta,q}$ is a normalizing constant which makes $\mu_{G',\beta,q}^i$ a probability measure.
Above, $\ind[\cdot]$ denotes the indicator function. Note that when $i=0$, the second sum is zero for all~$\sigma$. One obtains infinite-volume measures $\mu_{\beta,q}^i$ via weak limits, which are known to exist. The case $q=2$ is known as the Ising model.

Like the random-cluster model, the different infinite-volume Potts measures may be highly affected by their boundary conditions. However, if $\beta < \beta_c^\mathrm{w}(q)$, where $\beta_c^{\mathrm{w}}(q) := -\log [1 - p_c^1(q)]$, it is well-known that $\mu_{\beta,q}^0, \mu_{\beta,q}^1, \dots, \mu_{\beta,q}^q$ coincide. In this case, we denote the common measure by $\mu_{\beta,q}$. Using the relation with the random-cluster, we obtain the following results about the coding properties of the Potts model:

\begin{theorem}\label{cor:potts-coding}
Let $G$ be an infinite quasi-transitive graph, $q \ge 2$ be an integer and $\beta \ge 0$.
\begin{itemize}
 \item If  $\mu_{\beta,q}^0, \mu_{\beta,q}^1, \dots, \mu_{\beta,q}^q$ coincide, then $\mu_{\beta,q}$ is \ffiid.
 \item If $\beta<\beta_c^{\mathrm{w}}(q)$, then $\mu_{\beta,q}$ is \ffiid\ with exponential tails.
 \item If $\beta<\beta_c^{\mathrm{w}}(q)$, $G = \Z^d$ and $\Gamma$ is the translation group, then $\mu_{\beta,q}$ is $\Gamma$-\fvffiid\ with stretched-exponential tails.
\end{itemize}
\end{theorem}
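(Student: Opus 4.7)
The plan is to use the Edwards--Sokal coupling between the random-cluster and Potts models, thereby reducing the construction of a Potts coding to the random-cluster coding (already provided by Theorems~\ref{thm:fk-coding} and~\ref{thm:fk-fv}) followed by an equivariant cluster-coloring step. Set $p := 1-e^{-\beta}$, so that in the subcritical regime $\beta < \beta_c^{\mathrm{w}}(q)$ we have $p < p_c^1(q)$. For the first item, coincidence of the Potts measures $\mu_{\beta,q}^0,\mu_{\beta,q}^1,\dots,\mu_{\beta,q}^q$ implies $\phi_{p,q}^0=\phi_{p,q}^1$ (a standard consequence of the finite-volume Edwards--Sokal coupling) and also forces the a.s.\ absence of an infinite cluster in $\phi_{p,q}$ -- otherwise, assigning a uniform color to the unique infinite cluster would make the $q$ monochromatic-boundary Potts measures genuinely distinct. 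In the subcritical regime, absence of infinite clusters follows directly from $p < p_c^1(q)$ together with exponential decay of cluster sizes.

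Given that $\omega \sim \phi_{p,q}$ is realized as a (finitary) factor of an i.i.d.\ process $Y$, the Potts spin field is produced by independently coloring each cluster uniformly in $\{1,\dots,q\}$. To do this in an automorphism-equivariant and finitary fashion, take an auxiliary i.i.d.\ process $Z=(U_v,W_v)_{v\in\VV}$, independent of $Y$, with $U_v$ uniform on $[0,1]$ and $W_v$ uniform on $\{1,\dots,q\}$, and set
\[
\sigma_v := W_{w^*(v)}, \qquad w^*(v) := \arg\min_{u\in C(v)} U_u,
\]
where $C(v)$ is the cluster of $v$ in $\omega$. Since $C(v)$ is a.s.\ finite, $w^*(v)$ is well-defined a.s.; the map is clearly $\Gamma$-equivariant, and by Edwards--Sokal the resulting $\sigma$ has law $\mu_{\beta,q}$.

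For the coding-radius bounds, observe that to determine $\sigma_v$ one needs to reveal $\omega$ on the edges of $C(v)$ and on $\partial E(C(v))$ (in order to certify that the cluster has been fully exposed) and then to read $(U,W)$ at the vertices of $C(v)$. Hence the Potts coding radius at $v$ is controlled by $\diam(C(v))$ plus the maximum, over those finitely many revealed edges $e$, of the random-cluster coding radius $R_e$ supplied by Theorems~\ref{thm:fk-coding}/\ref{thm:fk-fv}. In the subcritical regime, exponential tails on $\diam(C(v))$ and on each $R_e$ combine (via a union bound over the a.s.\ finitely many relevant edges, using exponential cluster-size decay) to yield exponential tails for the Potts coding radius, giving the second item; the first item follows by the same construction without the tail claim.

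For the third item on $\Z^d$, we may no longer use the continuous $U_v$. Instead, replace $w^*(v)$ by the lexicographic minimum $\mathrm{lexmin}(C(v))$, which is well-defined for any finite cluster and commutes with translations of $\Z^d$. Then $Z_v := W_v$ is finite-valued, and combining Theorem~\ref{thm:fk-fv} (which provides an fv-ffiid coding for $\phi_{p,q}$ with stretched-exponential tails) with the deterministic lex-min coloring rule yields the desired fv-ffiid coding for $\mu_{\beta,q}$, whose coding radius inherits stretched-exponential tails by the same cluster-plus-radius decomposition as above. The main technical obstacle throughout is the bookkeeping that combines the random-cluster coding radius with the cluster diameter through the Edwards--Sokal coupling in infinite volume; in particular, justifying the absence of infinite clusters in the first item is the delicate point, since Theorem~\ref{thm:fk-coding} provides only the ffiid property of $\phi_{p,q}$ and not the cluster-finiteness needed to make the coloring finitary.
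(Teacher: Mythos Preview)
Your approach is essentially the same as the paper's: both use the Edwards--Sokal coupling, both implement the cluster-coloring step via $\arg\min$ of auxiliary uniform $[0,1]$ labels for items~1--2, both switch to the lexicographic minimum on $\Z^d$ for item~3, and both combine the random-cluster coding radius with the cluster diameter to obtain the tail bounds. Your explicit justification that coincidence of the $\mu_{\beta,q}^i$ forces the absence of infinite clusters is a point the paper leaves implicit in its citation of the Edwards--Sokal coupling, so that part of your writeup is actually clearer.

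There is, however, one genuine bookkeeping gap you have not addressed. The random-cluster coding furnished by Theorem~\ref{thm:fk-coding} (and Theorem~\ref{thm:fk-fv}) takes as input an i.i.d.\ process on the \emph{edges} $E$ (equivalently, on the vertices of the line graph), whereas the Potts coding you are constructing must take as input an i.i.d.\ process on the \emph{vertices} $\VV$. Your combined source $(Y,Z)$ has $Y$ indexed by $E$ and $Z$ indexed by $\VV$; this is not an i.i.d.\ process on~$\VV$ in the sense required by the definition of (ff/fv-ff)iid. The paper handles this with a separate lemma (Lemma~\ref{lem:Vertextoedge}) showing that any i.i.d.\ process on $E$ is a finitary $\Gamma$-factor, with bounded coding radius, of an i.i.d.\ process on $\VV$; for item~3 on $\Z^d$ under translations, it uses the explicit map $\tilde\psi$ that stores at each vertex the $d$ edge-variables of its positively-oriented incident edges, thereby keeping the source finite-valued. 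You should insert this conversion step; once that is done, your argument matches the paper's (the paper packages the tail calculation as a general ``composition of finitary codings with exponential tails'' lemma, while you sketch the direct union-bound over the cluster, but these are equivalent here).
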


In the case of the Ising model $(q=2)$ on $\Z^d$, the results of Theorem \ref{cor:potts-coding} were shown in \cite{van1999existence} (items one and two) and \cite{spinka2018finitaryising} (item three). For $q \neq 2$ on $\Z^d$, partial results were established in \cite{haggstrom2000propp} and \cite{spinka2018finitarymrf}. To the best of our knowledge, the above result is novel in all other cases.
%
%

\subsection{The loop $O(n)$ model}
The loop $O(n)$ model is a model for a random collection of non-intersecting loops on
the hexagonal lattice, which is believed to be in the same
universality class as the spin $O(n)$ model (see~\cite{peled2017lectures} for background on these models).
Let $\Omega$ be a connected finite subset of the hexagonal lattice $\HH$ whose complement is connected, and let $\text{LoopConf}(\Omega)$ be the set of subgraphs where every vertex is of degree 0 or 2, so that the non-trivial connected components form loops. The loop $O(n)$ measure with edge-weight $x>0$ and loop-weight $n>0$ is the probability measure $\nu_{\Omega,n,x}$ given by
\[
\nu_{\Omega,n,x}(\omega) := \frac{x^{o(\omega)} n^{\ell(\omega)} }{Z_{\Omega,n,x}} \cdot \mathbf{1}_{\text{LoopConf}(\Omega)}(\omega),  \qquad \omega \in \{0,1\}^{E(\Omega)},
\]
where $o(\omega)$ is the number of edges in $\omega$, $\ell(\omega)$ is the number of loops in $\omega$, and $Z_{\Omega,n,x}$ is a normalizing constant. The loop $O(n)$ model is conjectured to undergo a phase transition for any $0 \le n\le 2$ when the value of $x$ equals
\begin{equation*}
x_c(n):=\frac{1}{\sqrt{2+\sqrt{2-n}}}.
\end{equation*}

It is shown in~\cite[Theorems 1 and 2]{duminil2017macroscopic} that the loop $O(n)$ model has a unique periodic (i.e. invariant under a finite-index subgroup of the automorphism group of $\HH$) Gibbs measure $\nu_{n,x}$ whenever $n \ge 1$ and $nx^2 \le 1$, and moreover, that this is the unique Gibbs measure whenever $n \in [1,2]$ and $x=x_c(n)$ (we note that a unique periodic measure must be invariant under all automorphisms of $\HH$). In the latter case, we show that this measure has a finitary coding.

\begin{theorem}\label{thm:loop-model}
	Let $n \ge 1$ and $nx^2 \le 1$. Then $\nu_{n,x}$ is \fiid, and it is \ffiid\ when $n\in[1,2]$ and $x=x_c(n)$.
\end{theorem}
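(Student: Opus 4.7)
The plan is to place the loop $O(n)$ measure $\nu_{n,x}$ into the monotone-specification framework developed earlier in the paper, so that the abstract coupling-from-the-past (CFTP) construction and its finitary refinement become directly available. The natural tool is the domain-wall representation: in any simply connected region $\Omega \subset \HH$, loop configurations correspond bijectively, up to a global flip fixed by the boundary, to $\pm 1$ assignments to the hexagonal faces, with the loops reappearing as the set of edges separating $+$ and $-$ faces. Under this correspondence the weight $x^{o(\omega)}n^{\ell(\omega)}$ becomes a weight on spin configurations which, for $n \ge 1$, satisfies the FKG lattice condition, and so yields a monotone specification of the type to which the paper's theorems apply. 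The first task is to check carefully that the measurability and locality axioms of the framework are satisfied; the subtle point is that the loop-count factor $n^{\ell(\omega)}$ is genuinely non-local in the face variables, so FKG must be verified directly rather than simply quoted.

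With the monotone specification in hand, the first assertion (\fiid\ throughout $n \ge 1$, $nx^2 \le 1$) follows from uniqueness of the infinite-volume Gibbs measure, which is precisely \cite[Theorem~2]{duminil2017macroscopic}. Monotonicity plus uniqueness forces the finite-volume spin measures with $+$ and $-$ boundary to share a common weak limit, and the general CFTP construction of the paper then produces a coding of the spin field from an \iid\ input. Composing with the local, deterministic map that reads off an edge's state from the spins of its two adjacent faces converts this into a coding of $\nu_{n,x}$ itself; locality of the post-composition preserves the \fiid\ property.

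For the second assertion (\ffiid\ at criticality, $n \in [1,2]$ and $x = x_c(n)$), the additional ingredient is a quantitative spatial-mixing estimate sufficient to force the CFTP radius to be almost surely finite. Following the scheme used in the $\phi_{p,q}^0 = \phi_{p,q}^1$ part of Theorem~\ref{thm:fk-coding}, the needed input is an annulus-by-annulus decoupling: around each vertex $v$, one wants a divergent series of dyadic annuli on each of which the $+$- and $-$-initialized monotone dynamics coalesce. For the critical loop $O(n)$ model in the range $n \in [1,2]$ this is furnished by the RSW-type crossing estimates now available in this regime, which give polynomial decay of the influence of boundary conditions on events in a central box. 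Inserting these estimates into the paper's annulus-coupling argument shows that the coding radius is almost surely finite. The principal obstacle throughout is the first step -- confirming that the domain-wall representation yields a bona fide monotone specification in the sense of the paper -- since once that is settled the two claims reduce to well-understood external inputs: the uniqueness theorem of \cite{duminil2017macroscopic} for the first, and the critical RSW estimates for the second.
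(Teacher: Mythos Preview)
Your outline contains a genuine conceptual slip and, separately, takes a detour that the paper avoids.

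\medskip
\textbf{The slip.} You write that ``monotonicity plus uniqueness forces the finite-volume spin measures with $+$ and $-$ boundary to share a common weak limit.'' Here ``uniqueness'' refers to \cite[Theorem~2]{duminil2017macroscopic}, which is uniqueness for the \emph{loop} Gibbs measure $\nu_{n,x}$, not for the spin representation. These are not the same: the spin model has a global $\pm$ symmetry, so one can perfectly well have $\mu^+\neq\mu^-$ while their domain walls coincide (think of the low-temperature Ising model, where the $+$ and $-$ states differ but have the same contour law). Thus loop uniqueness does not yield $\mu^+=\mu^-$ throughout the region $n\ge 1$, $nx^2\le 1$, and the paper does not claim it. Fortunately the \fiid\ assertion does not need $\mu^+=\mu^-$ at all: item~1 of Theorem~\ref{thm:coding-for-specs} says that $\mu^+$ is \fiid\ as soon as it is a probability measure, which is automatic here since $S=\{+,-\}$ is finite. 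One then uses loop uniqueness only to identify the domain walls of $\mu^+$ with $\nu_{n,x}$, and composes with the local domain-wall map (plus Lemma~\ref{lem:Vertextoedge}) to transfer the coding. Your invocation of uniqueness is therefore both misplaced and, for this part, unnecessary.

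\medskip
\textbf{The detour.} For the \ffiid\ claim at $n\in[1,2]$, $x=x_c(n)$, you propose an annulus-by-annulus RSW scheme to force finite coding radius. The paper does something much shorter: it simply cites from \cite{duminil2017macroscopic} that in this regime the \emph{spin} measures satisfy $\mu^+=\mu^-$ (see the remark after Theorem~5 there), and then applies item~2 of Theorem~\ref{thm:coding-for-specs}, which says directly that $\mu^+=\mu^-$ implies \ffiid. No dyadic annuli, no quantitative RSW input, and no ``annulus coupling'' argument---that machinery is not part of the paper's framework. Your route might be made to work (indeed, the proof of $\mu^+=\mu^-$ in \cite{duminil2017macroscopic} itself passes through RSW), but it re-derives an input that can be quoted as a black box, and it misdescribes how Theorem~\ref{thm:coding-for-specs} is applied.
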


It would be interesting to determine whether $\nu_{n,x}$ is always \ffiid\ in the regime $n \ge 1$ and $nx^2 \le 1$.

\subsection{Long-range (ferromagnetic) Ising models}

Let $J=(J_A)_{A \subset \VV, |A| \leq 2}$ be a collection of non-negative numbers called the \emph{coupling constants} satisfying that
\[ \sum_{\substack{A \subset \VV, |A| \leq 2 \\v \in A}} J_A < \infty \qquad\text{for all } v \in \VV .\]
The Ising measure with coupling constants $J$ in a finite volume $V \subset \VV$ with boundary conditions $\tau$ is given by
\[ \mu^\tau_V(\sigma) := \frac{\1_{\{\sigma_{\VV \setminus V}=\tau_{\VV \setminus V}\}}}{Z^\tau_V} \cdot \exp\left[ \displaystyle\sum_{\substack{A \subset \VV, |A| \leq 2 \\A \cap V \neq \emptyset}} J_A \sigma_A \right] ,\qquad \sigma \in \{-1,+1\}^\VV , \]
where $\sigma_A := \prod_{v \in A} \sigma_v$.
We assume that the coupling constants are automorphism-invariant in the sense that $J_{\gamma A} = J_A$ for all $A$ and $\gamma$ in the automorphism group of $G$.

The long-range Ising model has an important monotonicity property, known as Griffiths's inequality~\cite{griffiths1967correlations}, which states that $\E[\sigma_A\sigma_B] \geq \E[\sigma_A] \E[\sigma_B]$ for any finite subsets $A,B \subset \VV$.
It is classical that this property implies monotonicity of the associated specifications, allowing us to define the largest and smallest Gibbs measures $\mu^+$ and $\mu^-$, respectively.

\begin{theorem}\label{thm:ising-coding}
Let $G$ be an infinite quasi-transitive graph and let $J$ be non-negative coupling constants as above. Then $\mu^+$ and $\mu^-$ are \fiid. In addition, if $\mu^+=\mu^-$ then $\mu^+$ is \ffiid.
\end{theorem}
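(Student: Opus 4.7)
The plan is to reduce Theorem~\ref{thm:ising-coding} to the general coupling-from-the-past framework for monotone specifications that underpins this paper. What needs to be verified is that the long-range ferromagnetic Ising model meets the hypotheses of that framework---monotonicity, automorphism-invariance, and well-definedness of the single-site conditional distributions. Griffiths's inequality for non-negative coupling constants implies that the finite-volume specifications $\mu^\tau_V$ are stochastically monotone in the boundary condition $\tau$ under the pointwise order $-1 \le +1$; equivalently, the single-site conditional $p_v(\sigma) := \mu(\sigma_v = +1 \mid \sigma_{\VV \setminus \{v\}})$ is monotone in $\sigma$. The summability $\sum_{A \ni v} J_A < \infty$ makes $p_v(\sigma)$ well-defined for every configuration and gives a well-posed continuous-time single-site Glauber dynamics (rate-one Poisson updates at each vertex, resampling from the conditional distribution), and the automorphism-invariance of $(J_A)_A$ makes the specification equivariant. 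With these, the extremal Gibbs measures $\mu^+$ and $\mu^-$ exist as monotone weak limits of the finite-volume $\pm$-measures.

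Applying the general theorem, I would construct the factor map through coupling-from-the-past: the iid input is, at each vertex $v$, an independent rate-one Poisson clock together with iid uniform marks; the monotone coupling sets $\sigma_v \leftarrow +1$ at an update time precisely when the associated mark is below $p_v(\sigma)$. Starting at time $-T$ from the all-$+$ configuration and running forward, the value at $v$ at time $0$ is monotone decreasing in $T$; its almost-sure limit as $T \to \infty$ is stationary and equal in distribution to $\mu^+$, and the resulting map $Y \mapsto \Phi_+$ is automorphism-equivariant, giving the \fiid\ conclusion for $\mu^+$. The symmetric construction starting from all-$-$ yields $\mu^-$. When $\mu^+ = \mu^-$, the two monotonically sandwiched CFTP processes must couple at every vertex in finite time almost surely, which produces the finite coding radius required for \ffiid.

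The step I expect to be the main obstacle, and the place where the infinite range of the interaction has to be dealt with, is making each configuration-dependent update finitary: a single update at a vertex $w$ depends in principle on every vertex in $\bigcup_{A \ni w,\, J_A > 0} A$, which need not be finite. The standard device, on which the general framework relies, is to compare each uniform mark first against a bracketing pair of approximations $p_w^{(r)}(\sigma)$ computed from only spins within distance $r$ of $w$ (with $\pm$-boundary imposed beyond), enlarging $r$ only when the mark falls strictly inside the current bracket. Summability of $(J_A)_{A \ni w}$ forces this procedure to terminate almost surely with finite $r$, so every update effectively reads only finitely many spins. Iterating the dependency back through time until the CFTP coupling time then yields an almost-surely finite dependency set around $v$, completing the \ffiid\ conclusion.
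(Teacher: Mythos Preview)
Your high-level plan---verify that the long-range Ising specification is monotone, $\Gamma$-invariant, and irreducible, then invoke Theorem~\ref{thm:coding-for-specs}---is exactly what the paper does, and its proof is essentially that one sentence. You omit the irreducibility check; this follows from the finite-energy property, which in turn is guaranteed by summability of the coupling constants.

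Where your proposal diverges is in the mechanism you sketch for the general CFTP argument, and this reflects a misconception worth correcting. You anticipate that the main obstacle is making each single-site update finitary via a bracketing scheme, because an update at $w$ depends on all of $\bigcup_{A \ni w} A$. But the paper's framework never confronts this issue. The device is structural rather than analytic: the upwards-downwards specification is only ever evaluated on configurations in $\Omega^+ \cup \Omega^-$, i.e., configurations equal to $\bplus$ or $\bminus$ outside a finite set. The dynamics $f_n^{\pm,v,r}$ first projects onto $\Omega^{\bplus}_{\VV_{v,r}}$ (or $\Omega^{\bminus}_{\VV_{v,r}}$) and then updates only inside $\VV_{v,r}$; every update is therefore applied to a configuration whose values outside $\VV_{v,r}$ are deterministically fixed, so the update is trivially a function of the spins in $\VV_{v,r}$ together with the randomness $Y_{\VV_{v,r}}$. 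Finitariness of the coding then comes from the sandwich $\sigma^{-,v,r}_v \le \sigma^-_v \le \sigma^+_v \le \sigma^{+,v,r}_v$ and the fact that the outer two agree for large $r$ when $\mu^+=\mu^-$. No bracketing, no backward branching-process analysis, and no control on the growth of a dependency set is needed.

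Your alternative route (run CFTP on the whole graph in continuous time, bracket each update, and trace dependencies backward) is in the spirit of perfect-simulation schemes such as those of Galves--L\"ocherbach, and could in principle be made to work, but the step ``iterating the dependency back through time \dots\ yields an almost-surely finite dependency set'' is not free: with infinite-range interactions the backward exploration is a branching process whose offspring distribution depends on the bracketing radii, and one would need a separate argument (typically a subcriticality condition) to ensure it terminates. The paper's finite-box approach sidesteps this entirely.
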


\subsection{Discussion}\label{sec:discussion}

In this section, we discuss the previously known results and their relation to ours. 

We begin with the random-cluster and Potts models, which are our primary interest here. Let us start by explaining the relation between these two models. When $q \ge 2$ is an integer, the two models are closely related via the Edwards--Sokal coupling, which allows to obtain samples of one from the other via a simple procedure (which also introduces additional randomness). In this coupling, to obtain a sample from the Potts measure $\mu^0_{\beta,q}$, one first samples a configuration $\omega$ from the free random-cluster measure $\phi^0_{p,q}$ with $p=1-e^{-\beta}$, and then assigns a single color in $\{1,\dots,q\}$ to all the vertices in each cluster of $\omega$, with the colors of different clusters chosen uniformly and independently. To obtain a sample from $\mu^i_{\beta,q}$ with $i \in \{1,\dots,q\}$, one follows the same procedure with $\omega$ sampled from the wired random-cluster measure $\phi^1_{p,q}$, except that any infinite cluster of $\omega$ (if any such clusters exist) is assigned color $i$. In the other direction, to obtain a sample from $\phi^i_{p,q}$ with $i \in \{0,1\}$, one first samples $\sigma$ from $\mu^i_{\beta,q}$, and then independently opens each edge with probability $p$ if its endpoints have the same color in $\sigma$, and closes it otherwise. This relationship allows to transfer many coding properties from one model to the other. We refer the reader to~\cite[Theorem~4.91]{grimmett2006random} for details.

H{\"a}ggstr{\"o}m--Jonasson--Lyons~\cite{haggstrom2002coupling} studied (non-finitary) coding properties of the random-cluster and Potts models on general graphs. Lemma~4.5 of that paper shows that $\mu_{\beta,q}^i$ with $i \in \{1,\dots,q\}$ is \fiid\ for any integer $q \ge 2$ and any $\beta \ge 0$.
That paper was particularly interested in the closely related notion of Bernoullicity (see Theorem~4.1 there), a classical mixing property from ergodic theory. In fact, on certain amenable graphs with mild geometric conditions -- namely quasi-transitive amenable graphs that satisfy
\[
\VV_{v,r} \setminus \VV_{v,r-1} \not\subset \VV_{u,r} \qquad\text{for any distinct }u,v \in \VV \text{ and infinitely many } r \in \N ,
\]
the notions of Bernoullicity and \fiid\ are equivalent ($\VV_{v,r}$ is the graph-ball of radius $r$ around $v$).
Although it is not explicitly stated there, the first item of Theorem~\ref{thm:fk-coding}, which states that the free and wired random-cluster measures are \fiid, is essentially contained in~\cite{haggstrom2002coupling}. We remark that a slightly stronger version of the above condition appears in Theorem \ref{thm:fv}.

The second and third items of Theorem~\ref{thm:fk-coding} are both new. As far as we know, previous results have been restricted to integer $q$, where they were deduced from results on the Potts model (see~\cite[Remark~5]{spinka2018finitaryising} and \cite[Remark~6]{spinka2018finitarymrf}). In particular, we are unaware of any finitary coding results which work directly on the random-cluster model. Let us also mention that, in the second item of the theorem, the coding radius is controlled by the rate of convergence of the finite-volume free and wired measures (see Theorem~\ref{thm:coding-for-specs}). Thus, the third item of the theorem will follow with the additional knowledge that the phase transition of the random-cluster model is sharp~\cite{duminil2017sharp}.

Theorem~\ref{thm:fk-fv} is concerned with finitary codings in which the \iid\ process is finite-valued. This is a rather natural restriction, as it makes the coding somewhat more useful for simulations. For the Ising model on $\Z^d$, van den Berg and Steif~\cite{van1999existence} constructed such codings, and the second author~\cite{spinka2018finitaryising} showed that such codings exist with stretched-exponential tails for their coding radius. Pushing the methods used to prove Theorem~\ref{thm:fk-coding}, we obtain so-called space-time finitary codings, which, by combining with a general tool from~\cite{spinka2018finitaryising}, allow us to deduce that such codings also exist for the random-cluster model.

Theorem~\ref{cor:Converse} is concerned with showing that the random-cluster measures are not \ffiid\ in certain situations. While this is not the main focus of this paper, the result was given in order to provide a more complete picture. The theorem is an extension of similar results for other models (see \cite[Theorem~2.1]{van1999existence} and \cite[Theorem~1.3]{spinka2018finitarymrf}), which are based on exponential-rate estimates which appear in~\cite{marton1994positive,bosco2010exponential}. Though previous results were for nearest-neighbor models on $\Z^d$, the key ideas apply in greater generality. We do point out however that certain technicalities may arise on other graphs, as is reflected in our need for the c-amenability property in the second item of Theorem~\ref{cor:Converse}. While we were able to verify that certain amenable graphs have this stronger property (e.g., the lamplighter group over $\Z^d$ for any $d \ge 1$), we do not know whether it holds for all infinite, one-ended, amenable, edge-transitive graphs; we note that two-ended graphs such as $\Z$ are not c-amenable.

Let us now turn to the Potts model.
Theorem~\ref{cor:potts-coding} was partially known in the case of $\Z^d$ (all results mentioned here are for $\Gamma$ the group of translations). In the Ising case $q=2$, van den Berg and Steif~\cite{van1999existence} showed the first two items (and a weakening of the third item) and the second author~\cite{spinka2018finitaryising} showed the third item.
In the general case $q \ge 2$, H{\"a}ggstr{\"o}m and Steif~\cite{haggstrom2000propp} showed that $\mu_{\beta,q}$ is \fvffiid\ (and it also follows from the proof that it is \ffiid\ with exponential tails) for $\beta$ small enough, and the second author~\cite[Corollary~1.5]{spinka2018finitarymrf} showed that $\mu_{\beta,q}$ is \ffiid\ with power-law tails for all $\beta<\beta_c(q)$ and that is \ffiid\ when $d=2$, $q \in \{2,3,4\}$ and $\beta=\beta_c(q)$.

There are very few known results regarding the coding properties of the loop $O(n)$ or long-range Ising models for general values of the parameters. Our results for these two models, as well as the random-cluster model, follow from coding results that apply to a general framework of monotone specifications, which will be discussed in Section~\ref{sec:generalframework}. While we developed this methodology with the random-cluster model in mind, it is sufficiently broad to also include the two other models.

Van den Berg and Steif's work \cite{van1999existence}, which we mentioned above in the context of the Ising model, also proves results for monotone Markov random fields. While their methods readily extend to finite-range models, they do not apply to infinite-range models --- i.e., models where the dependence of the conditional distribution at the origin on the boundary conditions is not confined to a bounded box around the origin.
The main technical innovation of our technique is that it allows one to consider such models. For example, the random-cluster model is an infinite-range model. To see this, observe that the conditional distribution of an edge depends not only on its neighboring edges, but rather on the connectivity of its two endpoints, which may require looking arbitrarily far away from the given edge. In fact, this shows that the conditional distribution does not even depend continuously on the boundary condition. 
Our framework requires neither this continuity assumption nor a `high-noise' assumption, both of which are used in works such as \cite{galves2010perfect}. Instead, we rely only on monotonicity and uniqueness, thereby allowing us to obtain non-perturbative results.

\subsection{Organization of the paper}
The next Section \ref{sec:generalframework} introduces the general framework of monotone specifications. In particular, it states Theorem~\ref{thm:coding-for-specs}, Theorem~\ref{thm:fv} and Corollary~\ref{cor:fv-coding}, which are the main technical results of this paper. Section \ref{sec:proofs} proves the theorems introduced above, assuming the theorems of Section \ref{sec:generalframework}. Section \ref{sec:MainCoupling} defines the coupled dynamics that is the basis for constructions of all codings in this paper, and then proves Theorem \ref{thm:coding-for-specs}. Section \ref{sec:mixing} proves a mixing time result, which is then used in the subsequent Section \ref{sec:space-time} to prove Theorem \ref{thm:fv}. Finally, Section \ref{sec:FiniteValued} proves Corollary \ref{cor:fv-coding}.

\subsection{Notation}

We now set up some notation which will be used for the rest of the paper.
Let~$G$ be an infinite locally finite quasi-transitive graph on a countable set $\VV$ (all graphs in this paper satisfy these conditions).
Denote the graph distance in~$G$ by $\dist(\cdot,\cdot)$. For sets $U,V \subset \VV$, we write $\dist(U,V) := \min_{u \in U,v \in V} \dist(u,v)$ and $\dist(u,V) := \dist(\{u\},V)$. We denote $\partial V := \{ u \in \VV : \dist(u,\VV \setminus V)=1 \}$ and $\partial v := \partial \{v\}$. Let
\[ \VV_{v,r} := \{ u \in \VV : \dist(u,v) \le r \} \]
be the ball of radius $r$ around $v$. We also denote
\[ B(r) := \max_{v \in \VV} |\VV_{v,r}| .\]
Recall that $\Gamma$ is a group acting on $\VV$. We extend the action of $\Gamma$ to $A^{\VV}$ (for any set $A$) by
\[ \gamma\omega := (\omega_{\gamma^{-1} v})_{v \in \VV} .\]

Let $\mu$ and $\nu$ be probability measures on a common discrete space $\cA$.
We denote by $\| \mu-\nu \|_{TV}$ the total variation distance between $\mu$ and $\nu$, i.e.,
\[ \| \mu-\nu \|_{TV} := \tfrac12 \sum_{a \in \cA} |\mu(a) - \nu(a)| = \max_{A \subset \cA} |\mu(A) - \nu(A)| .\]
When $\cA$ is partially ordered, we say that $\mu$ is \emph{stochastically dominated} by $\nu$, and write $\mu \leD \nu$, if $\mu(A) \le \nu(A)$ for any increasing event $A$.

\section{Finitary codings for monotone specifications}\label{sec:generalframework}

\subsection{The general framework}
\label{sec:general}

Let $\VV$ be countably infinite, $\Gamma$ be a group acting quasi-transitively on~$\VV$ and $(S,\le)$ be a totally ordered discrete spin space with a maximal element $+$. We extend the order on $S$ to the product partial order on
\[ \Omega:= S^{\VV}, \]
whose maximal element we denote by $\bplus$. Thus, given two elements $\omega,\omega' \in \Omega$,
\[ \omega \le \omega' \quad\iff\quad \omega_v \le \omega'_v \quad\text{for all }v \in \VV .\]
For a finite $V \subset \VV$ and $\tau \in \Omega$, define
\[
\Omega_V^\tau := \{ \omega \in \Omega : \omega_{\VV \setminus V} = \tau_{\VV \setminus V} \}.
\]
Denote
\[ \Omega^+ := \bigcup_{V\subset\VV\text{ finite}} \Omega^{\bplus}_V = \big\{ \omega \in \Omega : \omega\text{ agrees with $\bplus$ outside a finite set} \big\} .\]
An \emph{upwards specification} is a family of measures
\[
\rho = \{ \rho_V^\tau\}_{V \subset \VV\text{ finite},~ \tau \in \Omega^+ } ,
\]
where $\rho_V^\tau$ is a probability measure supported on $\Omega_V^\tau$, that satisfies the consistency relations that, for any finite $U \subset V \subset \VV$ and any $\tau,\tau' \in \Omega^+$,
\[ \rho^\tau_V = \rho^{\tau'}_V \qquad\text{whenever }\tau_{\VV\setminus V} = \tau'_{\VV\setminus V} \]
and
\[ \rho^\tau_V(\,\cdot \mid \Omega^\tau_U) = \rho^\tau_U \qquad\text{whenever }\rho^\tau_V(\Omega^\tau_U)>0 .\]
If we expand this family by defining measures for any $\tau \in \Omega$ and requiring the same consistency relations, we obtain a specification. Upwards specifications are simpler objects than specifications -- for one thing, there are only countably many measures in an upwards specification, whereas a specification requires uncountably many measures.
For any $v \in \VV$, we write $\rho^\tau_v$ as a shorthand for $\rho^\tau_{\{v\}}$.

An upwards specification is \emph{$\Gamma$-invariant} if
\[ \rho_{\gamma V}^{\gamma \tau}(\gamma^{-1} \omega \in \cdot) = \rho_{V}^{\tau} \qquad\text{for any }\gamma \in \Gamma,~ V \subset \VV\text{ finite and }\tau \in \Omega^+. \]
An upwards specification $\rho$ is \emph{irreducible} if, for any finite $V$, the set $\{ \omega \in \Omega^+ : \rho_V^{\bplus}(\omega)>0 \}$ contains~$\bplus$ and is connected in the Hamming graph on $\Omega^+$. An upwards specification is called {\em finite-energy} if there $\inf_{\tau \in \Omega^+, v \in \VV} \rho^\tau_{\{v\}}(\tau) >0$. Intuitively, this condition imposes a uniform lower bound on the cost of changing the configuration at a single vertex. It is straightforward to see that finite-energy is stronger than irreducibility. An upwards specification is \emph{monotonic} if
\[ \rho_{V}^{\tau} \leD \rho_{V}^{\tau'} \qquad\text{for any }V \subset \VV\text{ finite and }\tau,\tau' \in \Omega^+\text{ such that }\tau \le \tau' .\]

When $S$ has a minimal element $-$, we similarly define a notion of a \emph{downwards specification} by replacing $\bplus$ with $\bminus$, the minimal element in $\Omega$, and replacing $\Omega^+$ with $\Omega^-$, the set of configurations which equal $\bminus$ outside a finite set.
When $S$ has both a minimal and maximal element, we may also define a notion of an \emph{upwards-downwards specification}, where $\Omega^+$ is replaced with $\Omega^+ \cup \Omega^-$ above. Such an upwards-downwards specification $\rho$ may be equivalently seen as a pair $(\rho^+,\rho^-)$, where $\rho^+$ is an upwards specification and $\rho^-$ is a downwards specification. In this case, $\Gamma$-invariance of $\rho$ is equivalent to $\Gamma$-invariance of both $\rho^+$ and $\rho^-$, while monotonicity of $\rho$ is equivalent to monotonicity of both $\rho^+$ and $\rho^-$ along with an ordering between $\rho^+$ and $\rho^-$ in the sense that
\[
\rho_V^{\tau} \leD \rho_V^{\tau'} \quad \text{for any }V \subset \VV \text{ finite and } \tau \in \Omega^-, \tau' \in \Omega^+\text{ such that }\tau \le \tau'.
\]
On the other hand, by irreducibility of $\rho$, we mean that both $\rho^+$ and $\rho^-$ are irreducible, without requiring a joint condition.

Let $\rho$ be a monotone upwards specification. By monotonicity, $\rho^{\bplus}_U$ stochastically dominates $\rho^{\bplus}_V$ whenever $U \subset V$. Thus, there exists a weak limit
\[ \mu^+ := \lim_{V \uparrow \VV} \rho^{\bplus}_V .\]
The limit $\mu^+$ is in general a sub-probability measure on $\Omega$ (not necessarily supported on $\Omega^+$), and is $\Gamma$-invariant when $\rho$ is. If $S$ is finite, then $\mu^+$ is a probability measure.
When $\rho$ is a monotone downwards specification, we similarly define $\mu^-$.
In particular, when $\rho$ is an upwards-downwards specification, both $\mu^+$ and $\mu^-$ are well defined.

\subsection{The general results}
We now state the three general results that will be used to prove the main theorems of Section~\ref{sec:intro}.

\begin{theorem}\label{thm:coding-for-specs}
	Let $G$ be an infinite graph on vertex set $\VV$ and let $\Gamma$ be a group acting quasi-transitively on $\VV$ by automorphisms of $G$. Let $S$ be a totally ordered discrete spin space.
	\begin{enumerate}
	\item Suppose that $S$ has a maximal element and let $\rho$ be a monotone $\Gamma$-invariant irreducible upwards specification. If $\mu^+$ is a probability measure, then it is $\Gamma$-\fiid.
	\item Suppose that $S$ is finite and let $\rho$ be a monotone $\Gamma$-invariant irreducible upwards-downwards specification. Then $\mu^+$ is $\Gamma$-\fiid\ with a coding radius that satisfies
	\begin{equation} \label{eq:RadiusByTotalVariation}\Pr(R_v > r) \le (|S|-1) \cdot \big\|\rho^{\bplus}_{\VV_{v,r}}(\sigma_v \in \cdot) - \rho^{\bminus}_{\VV_{v,r}}(\sigma_v \in \cdot)\big\|_{TV} \qquad\text{for all }v \in \VV\text{ and }r \ge 0 .
	\end{equation}
	In particular, if $\mu^+=\mu^-$ then $\mu^+$ is $\Gamma$-\ffiid.
	\end{enumerate}
\end{theorem}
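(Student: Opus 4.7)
The plan is to apply coupling-from-the-past (CFTP) to a single-site Glauber-type dynamics whose stationary measure is $\mu^+$ (and, for part~(2), also $\mu^-$). At each vertex $v\in\VV$ take an independent unit-rate Poisson clock $T_v\subset\R$ together with an i.i.d.\ sequence of $\mathrm{Uniform}[0,1]$ variables $(U_{v,t})_{t\in T_v}$; bundled as $Y_v$, these form a $\Gamma$-invariant i.i.d.\ process. The update rule resamples $\sigma_v(t)$ at each $t\in T_v$ from $\rho_v^{\sigma(t^-)}$ via the inverse-CDF method with input $U_{v,t}$. Because $S$ is totally ordered this is the \emph{monotone coupling}, so pointwise order is preserved forward in time. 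The crucial feature is that starting from $\bplus$ (resp.\ $\bminus$) keeps the trajectory in $\Omega^+$ (resp.\ $\Omega^-$) almost surely, since only finitely many updates strike any finite region in a bounded time window; this is precisely the regime on which an upwards (resp.\ downwards) specification supplies all needed conditional distributions.

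For part~(1), let $\sigma^{(T)}$ denote the time-$0$ configuration of this dynamics started from $\bplus$ at time $-T$. If $T'\ge T$, then $\sigma^{(T')}(-T)\le\bplus$, and the monotone coupling propagates this inequality to time $0$, so $T\mapsto\sigma^{(T)}$ is pointwise non-increasing. A pointwise a.s.\ limit $\sigma^{*}:=\lim_{T\to\infty}\sigma^{(T)}$ therefore exists and is a measurable $\Gamma$-equivariant function of $Y$. Standard comparison with the finite-volume dynamics on an exhaustion $V_n\uparrow\VV$, which (being an irreducible finite-state chain) relaxes to its stationary measure $\rho^{\bplus}_{V_n}$, combined with the weak convergence $\rho^{\bplus}_{V_n}\Rightarrow\mu^+$, identifies the law of $\sigma^{*}$ as $\mu^+$, yielding the $\Gamma$-\fiid\ coding.

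For part~(2) the idea is to \emph{localize} the CFTP to the finite box $\VV_{v,r}$. For each $r\ge0$ and $\tau\in\{\bplus,\bminus\}$, run Glauber dynamics inside $\VV_{v,r}$ with $\tau$ frozen on $\VV\setminus\VV_{v,r}$, driven by $Y|_{\VV_{v,r}}$ and started from $\tau$ at time $-T$. Since $S$ is finite, irreducibility plus the monotone coupling force coalescence, so the $T\to\infty$ CFTP output $\zeta^{\tau,r}_v$ is a.s.\ well-defined and distributed as $\rho^{\tau}_{\VV_{v,r}}(\sigma_v\in\cdot)$. Comparing the $\VV_{v,r}$- and $\VV_{v,r+1}$-dynamics by viewing both on the larger box -- with the former having $\bplus$ frozen on the annulus $\VV_{v,r+1}\setminus\VV_{v,r}$ -- monotonicity yields $\zeta^{+,r}_v\ge\zeta^{+,r+1}_v$ and dually $\zeta^{-,r}_v\le\zeta^{-,r+1}_v$, while $\zeta^{-,r}_v\le\zeta^{+,r}_v$ always. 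Define $\sigma_v:=\lim_{r\to\infty}\zeta^{+,r}_v$; this limit exists a.s., is a $\Gamma$-equivariant function of $Y$, and has marginal law $\mu^+(\sigma_v\in\cdot)$ since $\rho^{\bplus}_{\VV_{v,r}}\Rightarrow\mu^+$. On $\{\zeta^{+,r}_v=\zeta^{-,r}_v\}$ the sandwich inequality forces $\zeta^{\pm,r'}_v$ to equal this common value for every $r'\ge r$, so $\sigma_v$ is determined by $Y|_{\VV_{v,r}}$ and $R_v\le r$. The bound \eqref{eq:RadiusByTotalVariation} then follows from a standard monotone-coupling estimate: writing $S=\{s_1<\cdots<s_{|S|}\}$, we have $\{\zeta^{-,r}_v\ne\zeta^{+,r}_v\}=\bigcup_{i<|S|}\{\zeta^{-,r}_v\le s_i<\zeta^{+,r}_v\}$, and each of these $|S|-1$ events has probability $\rho^{\bplus}_{\VV_{v,r}}(\sigma_v>s_i)-\rho^{\bminus}_{\VV_{v,r}}(\sigma_v>s_i)\le\|\rho^{\bplus}_{\VV_{v,r}}(\sigma_v\in\cdot)-\rho^{\bminus}_{\VV_{v,r}}(\sigma_v\in\cdot)\|_{TV}$. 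The \ffiid\ conclusion when $\mu^+=\mu^-$ is immediate, since the total variation above then vanishes as $r\to\infty$.

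The main obstacle is the infinite-range nature of $\rho$: the conditional update at a single vertex depends on the whole configuration and is a priori discontinuous, ruling out a direct construction of the dynamics under general initial data. I circumvent this by only ever considering trajectories that agree with $\bplus$ or $\bminus$ outside a finite random set, which holds automatically when started from those configurations and is exactly where the upwards/downwards specification supplies every needed update. A more delicate point is the pathwise $r$-monotonicity of $\zeta^{+,r}_v$, which must be propagated along \emph{all} times of the coupled dynamics (not merely at equilibrium); this is precisely what the monotone coupling formulation allows.
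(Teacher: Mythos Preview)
Your approach for part~(2) is essentially the paper's: localize the CFTP dynamics to the finite box $\VV_{v,r}$, exploit that the $\bplus$- and $\bminus$-bounded processes are monotone in $r$ and sandwich the limit, and bound $\Pr(\zeta^{+,r}_v\neq\zeta^{-,r}_v)$ by $(|S|-1)$ times the total variation. The only cosmetic difference is that the paper runs a discrete-time sweep dynamics (each ``step'' updates every site in $\VV_{v,r}$ once, in a random $\Gamma$-equivariant order), whereas you use Poisson clocks; both work.

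For part~(1), however, there is a real gap. You assert that the infinite-volume dynamics started from $\bplus$ ``keeps the trajectory in $\Omega^+$ almost surely, since only finitely many updates strike any finite region in a bounded time window.'' The second clause is true but does not imply the first. After any positive time interval, almost surely \emph{infinitely} many vertices have had a clock ring (each vertex independently with positive probability), so the configuration $\sigma(t^-)$ agrees with $\bplus$ on the complement of an \emph{infinite} set. It is therefore not in $\Omega^+$, and $\rho_v^{\sigma(t^-)}$ is simply undefined for an upwards specification. For a finite-range model one could recover well-definedness via a Harris-type backward-cone argument, but precisely because $\rho$ is infinite-range the backward dependence set of $(v,0)$ is all of $\VV\times(-T,0)$, not a finite random set.

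The paper never attempts to make sense of an infinite-volume dynamics. It defines everything through the finite-volume maps $f_n^{+,v,r}$ on $\VV_{v,r}$, proves these are monotone decreasing in both $n$ and $r$ (Lemma~\ref{lem:mono}), and takes the joint limit $\sigma^+:=\lim_{n,r\to\infty} f_n^{+,v,r}(\bplus)$ directly (Corollary~\ref{cor:ASconvergence}). In other words, the localization you carry out for part~(2) is exactly what is needed for part~(1) as well; the ``global'' construction you describe for part~(1) cannot be made to work in this generality, and the fix is simply to reuse your part-(2) localized CFTP and take $r\to\infty$ without ever invoking an infinite-volume trajectory.
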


The state spaces for the \iid\ process $Y$ in the above theorem are unrestricted (one may think of $(T,\cT)$ as Lebesgue space on $[0,1]$). In the next section, we wish to control the `amount of temporal information' used by the coding -- heuristically, how many times must the factor map query a (finite-valued) input at any vertex. To this end, we equip the space $(T,\cT)$ with a more explicit structure, namely, we assume that $T=\tilde{T}^\N$, where $\tilde{T}$ is \emph{finite}.
Recall that the coding radius of a coding $\varphi \colon T^{\VV} \to S^{\VV}$ at a vertex $v \in \VV$ and a configuration $y \in T^{\VV}$ is the minimal $r \ge 0$ such that $\varphi(y)_v$ is determined by $(y_w)_{w \in \VV_{v,r}}$.
We analogously define $R^*_v(y)$, the \emph{space-time coding radius} of $\varphi$ at $v$ and $y$, to be the minimal $r \ge 0$ such that $\varphi(y)_v$ is determined by $(y_w(i))_{w \in \VV_{v,r}, 0 \le i \le r}$. We say that such a coding is \emph{space-time finitary} if $R_v^*=R^*_v(Y)$ is almost surely finite for every $v$. In this setting, when $Y$ is said to be an \iid\ process, we mean that $\{Y_v(n)\}_{v \in \VV,n \in \N}$ is a collection of \iid\ random variables supported on the finite set $\tilde{T}$.

We add one final piece of notation before we state the theorem: an upwards-downwards specification $\rho$ is {\em marginally finite} if $\{\rho_v^{\tau}(\sigma_v \in \cdot)\}_{v\in \VV,\,\tau \in \Omega^+\cup\Omega^-}$ is a finite collection of distinct measures.

\begin{theorem}\label{thm:fv}
Let $G$ be an infinite graph on vertex set $\VV$ and let $\Gamma$ be a group acting quasi-transitively on $\VV$ by automorphisms of $G$. Suppose that
\begin{equation}\label{eq:sphere-condition}
\VV_{v,r} \setminus \VV_{v,r-1} \not\subset \VV_{u,r} \qquad\text{for any distinct }u,v \in \VV \text{ and } r\geq 0.
\end{equation}
Let $S$ be a totally ordered finite spin space and $\rho$ be a monotone $\Gamma$-invariant irreducible marginally finite upwards-downwards specification.
\begin{itemize}
 \item If $\mu^+=\mu^-$, then there exists a space-time finitary coding from an \iid\ process $Y$ to $\mu^+$.
 \item Suppose that there exist $C,c>0$ such that
\begin{equation}\label{eq:weak-mixing-cond}
\big\|\rho^{\bplus}_{\VV_{v,r}}(\sigma_v \in \cdot) - \rho^{\bminus}_{\VV_{v,r}}(\sigma_v \in \cdot)\big\|_{TV} \le Ce^{-cr} \qquad\text{for all } v\in \VV \text{ and } r  \ge 0 .
\end{equation}
If $G$ has sub-exponential growth, i.e., $B(r)=\exp(o(r))$, then the tails of the space-time coding radius beats any stretched-exponential, i.e., $\Pr(R^*_v \ge r) \le \exp(-r^{1-o(1)})$. Moreover, if $G$ has growth $B(r)=\exp(o(\frac{r}{\log r}))$, then the space-time coding radius has exponential tails.
\end{itemize}
\end{theorem}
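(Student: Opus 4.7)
The plan is to construct the coding via coupling-from-the-past (CFTP) for a monotonically-coupled, finite-valued-driven single-site Glauber dynamics, and to quantify coalescence using the infinite-range Martinelli--Olivieri analogue proved in Section~\ref{sec:mixing}.

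First I would realize each single-site update as a measurable function of the current configuration and one label $Y_v(t)$ taking values in a finite alphabet $\tilde T$. The marginally-finite assumption is precisely what makes this possible: the collection of distinct single-site conditionals $\{\rho_v^\tau(\sigma_v \in \cdot)\}_{v,\tau}$ is finite, and a Strassen-type construction inside this finite collection produces an update rule monotone in the boundary configuration. For each triple $(v, r, T)$, run two monotonically-coupled chains on the box $\VV_{v,r}$ driven by the same iid input $Y$, started at time $-T$ from $\bplus$ and $\bminus$ boundary conditions respectively; by monotonicity they stay ordered throughout, and as $r, T \to \infty$ the law of the $\bplus$-chain at site $v$ at time $0$ converges to $\mu^+(\sigma_v \in \cdot)$ and analogously for $\bminus$ and $\mu^-$. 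Define $\varphi(Y)_v$ to be the common value at $v$ and time $0$ as soon as the two chains coalesce there; when $\mu^+ = \mu^-$, this coalescence occurs almost surely for some finite $(r, T)$, producing a space-time finitary coding. The identity $\varphi(Y) \sim \mu^+$ and the $\Gamma$-equivariance are inherited from the symmetric, $\Gamma$-covariant construction, exactly as in the proof of Theorem~\ref{thm:coding-for-specs}.

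For the quantitative bounds of the second bullet I would combine two inputs. The weak-mixing hypothesis~\eqref{eq:weak-mixing-cond} bounds by $Ce^{-cr}$ the total-variation distance at site $v$ between the finite-volume stationary measures $\rho^{\bplus}_{\VV_{v,r}}$ and $\rho^{\bminus}_{\VV_{v,r}}$. The Section~\ref{sec:mixing} mixing-time theorem then upgrades this into a temporal mixing-time bound of order $\log B(r)$ up to polylogarithmic factors in $r$ for the monotonically-coupled dynamics inside $\VV_{v,r}$. Let $r'$ be a radius to be tuned below $r$ and use time budget $T \leq r$. The event $\{R^*_v > r\}$ then splits into a failure to coalesce in $\VV_{v,r'}$ by time $T$ (bounded by summing the stationary weak-mixing gap $e^{-cr'}$ with the mixing residual at time $T$) and a finite-volume-to-infinite-volume discrepancy at radius $r'$ (again controlled by~\eqref{eq:weak-mixing-cond}). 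Tuning $r'$ against $r$ subject to $T_{\mathrm{mix}}(\VV_{v,r'}) \leq T \leq r$ yields $\exp(-r^{1-o(1)})$ tails whenever $\log B(r) = o(r)$, and genuinely exponential tails whenever $\log B(r) = o(r/\log r)$.

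The main obstacle is the mixing-time theorem of Section~\ref{sec:mixing}: the classical Martinelli--Olivieri machinery relies heavily on the finite-range property and on the cube-decomposition of $\Z^d$, so adapting it to infinite-range specifications on general quasi-transitive, sub-exponential-growth graphs requires both handling the non-local and possibly discontinuous dependence of single-site conditionals on configurations outside the box, and replacing $\Z^d$-specific combinatorial estimates by geometric counterparts controlled through $B(r)$ and the sphere condition~\eqref{eq:sphere-condition}. Once this mixing estimate is in hand, the CFTP implementation, the verification that the finite-valued monotone update rule yields a valid coding of $\mu^+$, and the optimization of the space-time budget are standard bookkeeping in the spirit of~\cite{van1999existence} and~\cite{spinka2018finitaryising}.
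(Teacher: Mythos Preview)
Your outline captures the CFTP-with-monotone-coupling skeleton and correctly identifies that the marginally-finite hypothesis is what lets the single-site update be driven by a \emph{finite} alphabet. But there is a genuine gap: you never explain how the vertices inside a round are \emph{ordered} using only finite-valued randomness in a $\Gamma$-equivariant way, and you misplace the role of the sphere condition~\eqref{eq:sphere-condition}.

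In the paper's dynamics each round performs a systematic scan of $\VV_{v,r}$ according to an order $\preceq_\eta$ induced by a second \iid\ field $\eta\in\cB^\VV$. When $\cB$ is finite (as it must be here), ties are inevitable, so the paper resolves $u\preceq_\eta v$ by comparing the sphere-sums $Z_{w,n}(\eta)=\sum_{x\in\VV_{w,n}\setminus\VV_{w,n-1}}\eta_x$ lexicographically in $n$. Condition~\eqref{eq:sphere-condition} is exactly what guarantees that this tie-breaking terminates almost surely, with exponential tails on the radius $R_{u,v}(\eta)$ needed (Lemma~\ref{lem:finiteordering}). You place~\eqref{eq:sphere-condition} inside the Martinelli--Olivieri adaptation of Section~\ref{sec:mixing}, but Theorem~\ref{thm:mixing} never uses it; its only appearance in the paper is in this ordering lemma. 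Without this piece your claim that ``$\varphi(Y)_v$ is determined once the two chains coalesce at $v$'' does not yield space-time finitariness: even after coalescence, the update order inside $\VV_{v,r}$ may depend on $\eta$-values \emph{outside} $\VV_{v,r}$, so one cannot certify the output from $(Y_w(i))_{w\in\VV_{v,r},\,i\le T}$ alone. The paper absorbs this extra cost into the stopping time $T^*_v$ by also requiring $R_{u,w}(B_i)\le n$ for all $u,w\in\VV_{v,n}$ and $i\le n$, and bounds the resulting overhead via Lemma~\ref{lem:finiteordering} and a union bound over $nB(n)^2$ pairs.

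A smaller point: the quantitative step is simpler than your $r'$-versus-$r$ tuning suggests. Theorem~\ref{thm:mixing} already works along the diagonal, bounding $\|f_n^{+,v,n}(\bplus)_v-f_n^{-,v,n}(\bminus)_v\|_{TV}$ directly by $e^{-n^{1-o(1)}}$ (resp.\ $e^{-c'n}$), so no separate spatial-versus-temporal optimization is needed; one just writes $\Pr[T^*_v>2n]\le \Pr[E_n]+\Pr[T_v>n]$ where $E_n$ is the ordering-failure event.
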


Condition~\eqref{eq:weak-mixing-cond} is commonly referred to as \emph{weak spatial mixing}.
The proof of the second item in Theorem~\ref{thm:fv} relies on controlling the mixing-time of a natural single-site dynamics for specifications satisfying weak spatial mixing (see Section~\ref{sec:mixing}).

In the case in which $\mathbb{Z}^d$ and $\Gamma$ is restricted to translations of the lattice, we can use the setup of~\cite{spinka2018finitaryising} to deduce the existence of \fvffiid\ codings:
\begin{corollary}\label{cor:fv-coding}
Let $G$ be $\mathbb{Z}^d$ or its line graph, $\Gamma$ be the group of translations, $S$ be a totally ordered finite spin space, and $\rho$ be a monotone $\Gamma$-invariant irreducible marginally finite upwards-downwards specification that satisfies~\eqref{eq:weak-mixing-cond}. 
Then $\mu^+$ is $\Gamma$-\fvffiid\ with stretched-exponential tails.
\end{corollary}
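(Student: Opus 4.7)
The plan is to derive Corollary \ref{cor:fv-coding} by combining Theorem \ref{thm:fv} with a general black-box result of \cite{spinka2018finitaryising} tailored to $\Z^d$. First I would verify the sphere condition \eqref{eq:sphere-condition} on both $\Z^d$ (under the standard graph metric) and its line graph: for $\Z^d$, picking a coordinate direction along which $u$ and $v$ differ, then taking the extremal vertex of $\VV_{v,r}\setminus \VV_{v,r-1}$ in the opposite direction produces a point at distance strictly greater than $r$ from $u$, and the line-graph case is handled analogously. Since $|\VV_{v,r}| \le (2r+1)^d$ in $\Z^d$ (and polynomial of similar order for its line graph), we have $B(r)=\exp(O(\log r))=\exp(o(r/\log r))$. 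Hence the second item of Theorem~\ref{thm:fv} applies, yielding a translation-equivariant space-time finitary coding $\varphi$ from a finite-alphabet \iid\ process $Y=(Y_v(i))_{v\in\Z^d,\, i\in\N}$ to $\mu^+$, whose space-time coding radius $R^*_v$ has exponential tails.

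The remaining task is to convert this space-time coding into a genuine \fvffiid\ coding, in which the input per site is a \emph{single} finite-valued random variable rather than a countable sequence of such variables. This is precisely the role of the general conversion result in \cite{spinka2018finitaryising}: any translation-equivariant space-time finitary coding on $\Z^d$ from a finite-alphabet \iid\ process, with exponential tails on the space-time coding radius, can be upgraded to a translation-equivariant \fvffiid\ coding whose ordinary coding radius has stretched-exponential tails. Plugging in the coding produced in the preceding paragraph yields exactly the conclusion asserted by the corollary.

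The main obstacle -- were one to reprove the conversion rather than invoke \cite{spinka2018finitaryising} -- is the compression of the infinite, time-indexed sequence $(Y_v(i))_{i\in\N}$ of finitely many values per site into a \emph{single} finite-valued random variable per site, while simultaneously preserving translation-equivariance and keeping the coding radius almost-surely finite. The standard approach is a hierarchical, multi-scale encoding that scatters the high-time-index information spatially across a sparse sublattice, exploiting the ample room afforded by polynomial growth together with the exponential decay of $R^*_v$ to control tails. The polynomial spatial blow-up incurred when one must later recover the scattered time data is precisely what degrades exponential tails on $R^*_v$ into stretched-exponential tails on $R_v$. Since this multi-scale construction is established in \cite{spinka2018finitaryising} at a level of generality directly covering our situation, no additional work beyond checking its hypotheses (a finite-alphabet \iid\ input, translation-equivariance, and exponential tails on $R^*_v$) is required.
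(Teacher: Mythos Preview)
Your approach is essentially the same as the paper's: apply Theorem~\ref{thm:fv} to obtain a space-time finitary coding with exponential tails on $R^*_v$, then invoke the conversion result from~\cite{spinka2018finitaryising} (stated in the paper as Proposition~\ref{prop:coding}) to upgrade to a \fvffiid\ coding with stretched-exponential tails.

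One point you gloss over is the line-graph case. The conversion result in~\cite{spinka2018finitaryising} is stated for processes indexed by the vertices of $\Z^d$, not by its edges; your write-up already slips into writing $Y=(Y_v(i))_{v\in\Z^d}$ even though for the line graph the process lives on $E(\Z^d)$. The paper handles this explicitly: it packages the $d$ edges in the positive coordinate directions at each vertex into a single vertex variable, so that both $X'\sim\mu^+$ and the \iid\ input $Y'$ become processes on $\Z^d$ (with values in $S^d$ and $\tilde T^d$, respectively), applies Proposition~\ref{prop:coding} there, and then unpacks. This step is routine but relies on $\Gamma$ being only the translation group, and it should be mentioned rather than absorbed into ``handled analogously.''
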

We believe that codings from a finite-valued \iid\ process should exist for a much larger class of graphs (perhaps graphs satisfying~\eqref{eq:sphere-condition} and having sub-exponential growth).

\section{Proofs of main results}\label{sec:proofs}

In this section, we prove the theorems stated in Section~\ref{sec:intro}.
All the theorems, with the exception of Theorem~\ref{cor:Converse}, will follow from the general results given in Section~\ref{sec:generalframework}. The proofs of these general results are postponed to the subsequent sections.

\subsection{The random-cluster model}

In this section, we prove the three main theorems about the random-cluster model, namely, Theorem~\ref{thm:fk-coding}, Theorem~\ref{cor:Converse} and Theorem~\ref{thm:fk-fv}. We first place the model in the general framework of Section~\ref{sec:generalframework}.

Consider the random-cluster model with parameters $p \in [0,1]$ and $q \ge 1$ on an infinite quasi-transitive graph $G=(\VV,E)$. While the random-cluster model is most naturally defined on the edges of $G$, our abstract definitions are stated for models defined on the vertex set of a graph, and therefore we view the random-cluster model as a 
`living' on the line graph $\cG$ of~$G$, whose vertex set is $E$. Let $S:=\{0,1\}$ and let $\Gamma$ be a group acting quasi-transitively on $\VV$ by automorphisms of $G$.
The random-cluster model has two natural specifications associated to it -- the free-DLR and wired-DLR specifications -- corresponding to the choice of $i \in \{0,1\}$ in the definition of the model. These are denoted by $\rho^{\text{free}}$ and $\rho^{\text{wired}}$ and defined by
\begin{equation}\label{eq:FK-DLR}
\rho^{\text{\#},\tau}_F(\omega) ~\propto~ p^{o_F(\omega)} (1-p)^{c_F(\omega)} q^{k^{\text{\#}}_F(\omega)} \1_{\Omega^\tau_F}(\omega) , \qquad \omega \in \{0,1\}^E ,
\end{equation}
where $F$ is a finite subset of $E$, $o_F(\omega)$ and $c_F(\omega)$ are the numbers of open and closed edges in $F$, and $k^{\text{\#}}_F(\omega)$ is the number of open clusters that intersect an endpoint of some edge in $F$, with $k^\text{wired}_F(\omega)$ 
counting only finite clusters and $k^\text{free}_F(\omega)$ counting both finite and infinite clusters (we note that the usual notion of `free' and 'wired' boundary conditions correspond to $\rho_F^{\text{free}, \zero}$ and $\rho_F^{\text{wired}, \1}$, where $\zero$ and $\1$ correspond to the all-closed and all-open configurations, respectively).

\begin{remark}\label{remark:1}
A specification $\rho$ gives rise to the notion of a Gibbs measure (also called a DLR state), which is a probability measure $\mu$ such that, for every finite $F \subset E$ and $\mu$-a.e. $\tau \in \{0,1\}^E$, the conditional law of $\omega$ under $\mu$ given that $\omega \in \Omega^\tau_F$ is $\rho^\tau_F$. Thus, in general, the random-cluster model has two notions of Gibbs measures. For amenable quasi-transitive connected graphs, any Gibbs measure (of either of the two specifications) has at most one infinite cluster with probability 1; in this case, there is no distinction between free-DLR and wired-DLR Gibbs measures. In particular, $p_c^0(q) = p_c^1(q)$ for every amenable graph. For more general graphs, the number of infinite clusters may be infinite, in which case, this distinction is essential: for example, $\phi^1_{p,q}$ may not satisfy the free-DLR condition for certain graphs and values of $p$ and $q$, but always satisfies the wired-DLR condition. Similarly, $\phi^0_{p,q}$ may not be a wired-DLR random-cluster measure (see Section 6.4 of \cite{georgii2001random} for further discussion). Also, every Gibbs measure (in either the free or wired DLR sense) stochastically dominates $\phi^0_{p,q}$ and is stochastically dominated by $\phi^1_{p,q}$.
\end{remark}

Let $\rho=(\rho^+,\rho^-)$ be the upwards-downwards specification given by $\rho^+ := (\rho^{\text{wired},\tau}_F)_{F \subset E\text{ finite }, \tau \in \Omega^+}$ and $\rho^- := (\rho^{\text{free},\tau}_F)_{F \subset E\text{ finite }, \tau \in \Omega^-}$.
Whenever $q \ge 1$, the FKG property of the random-cluster model implies that $\rho$ is a monotone specification~(see~\cite[Theorems~3.8 and~2.27]{grimmett2006random}). It is clear from~\eqref{eq:FK-DLR} that $\rho$ is $\Gamma$-invariant. Since, by~\eqref{eq:FK-DLR},
\[ \big\{ \rho^{\text{free},\tau}_e(\sigma_e = s),~ \rho^{\text{wired},\tau}_e(\sigma_e = s) : \tau \in \Omega,~ e \in E,~ s \in \{0,1\} \big\} = \big\{ p,1-p, \tfrac{p}{p+(1-p)q}, \tfrac{(1-p)q}{p+(1-p)q} \big\} ,\]
it is clear that $\rho$ is irreducible and marginally finite.

\begin{proof}[Proof of Theorem~\ref{thm:fk-coding}]
	The free and wired random-cluster measures, $\phi^0_{p,q}$ and $\phi^1_{p,q}$, are precisely the measures $\mu^-$ and $\mu^+$ obtained from the upwards-downwards specification $\rho$.
	Therefore, the first and second items of Theorem~\ref{thm:fk-coding} are immediate consequences of the first and second items of Theorem~\ref{thm:coding-for-specs}, respectively.

	We now turn to the third item of Theorem~\ref{thm:fk-coding}.
    Define $F_{e,r}$ be the set of edges whose distance (taken in the line graph of $G$) from $e$ is at most $r$. For the third item of Theorem~\ref{thm:fk-coding}, it remains only to show that, when $p<p_c^1(q)$, there exist $C,c>0$ such that
    \[ \big\|\rho^{\text{wired},\bplus}_{F_{e,r}}(\sigma_e \in \cdot) - \rho^{\text{free},\bminus}_{F_{e,r}}(\sigma_e \in \cdot)\big\|_{TV} \le Ce^{-cr} \qquad\text{for all } e \in E \text{ and }r \geq 0 .\]
    It is well-known that one may couple samples from $\rho^{\text{wired},\bplus}_{F_{e,r}}$ and $\rho^{\text{free},\bminus}_{F_{e,r}}$ so that they agree on $e$ whenever the endpoints of $e$ are disconnected from the boundary. Thus, if $v$ is an endpoint of $e$,
    \[ \big\|\rho^{\text{wired},\bplus}_{F_{e,r}}(\sigma_e \in \cdot) - \rho^{\text{free},\bminus}_{F_{e,r}}(\sigma_e \in \cdot)\big\|_{TV} \le \phi^1_{\VV_{v,r-1},p,q}(e \leftrightarrow \partial \VV_{v,r-1}) .\]
    The exponential decay of the right-hand side is exactly the content of~\cite[Theorem~1.2]{duminil2017sharp} which establishes the sharpness of the phase transition for the random-cluster model.
\end{proof}

\begin{proof}[Proof of Theorem~\ref{thm:fk-fv}]
In light of the above, Theorem~\ref{thm:fk-fv} is a direct application of Corollary~\ref{cor:fv-coding}.
\end{proof}


\begin{proof}[Proof of Theorem~\ref{cor:Converse}]
We begin with the first item of the theorem, which we proceed to establish by contradiction. Thus, we assume towards a contradiction that $\phi^0_{p,q} \neq \phi^1_{p,q}$ and that $\phi^1_{p,q}$ is $\Gamma$-\ffiid\ for some group $\Gamma$ acting quasi-transitively by automorphisms. Since $G$ is amenable, there exists at most one infinite connected component $\phi^i_{p,q}$-almost surely; thus, $\phi^i_{p,q}$ is both a free-DLR and a wired-DLR Gibbs state (see Remark \ref{remark:1} and \cite[Proposition 6.19]{georgii2001random}). It thus does not matter which specification we work with; we will use the wired-DLR specification for concreteness, but denote it as $\rho$ for notational clarity.

%
	We begin by assuming that $\Gamma$ acts transitively on $E$. We write $\omega$ for a generic random element of $\{0,1\}^E$.
	Since $G$ is amenable, there exists a sequence $F'_n \subset \VV$ of non-empty finite subsets such that $|\partial F'_n|/|F'_n| \to 0$ as $n \to \infty$. Letting $F_n$ denote the set of edges spanned by $F'_n$, and $\partial F_n$ denote the set of edges in $E \setminus F_n$ that share an endpoint with an edge in $F_n$, we have that $|\partial F_n|/|F_n| \to 0$ as $n \to \infty$.
	For $n \ge 1$, denote
	\[ Z_n := \frac{1}{|F_n|} \sum_{e \in F_n} \omega_e .\]
	Denote $a_0 := \phi^0_{p,q}(\omega_e)$ and $a_1 := \phi^1_{p,q}(\omega_e)$ and note that $a_0<a_1$. Since $\phi^1_{p,q}$ is $\Gamma$-\ffiid, it follows that the convergence in the ergodic theorem occurs at an exponential rate for $\phi^1_{p,q}$ (this was shown in~\cite{bosco2010exponential} for the case $G=\Z^d$, and the proof there goes through with no changes for an arbitrary quasi-transitive graph $G$). Hence, denoting $a := \frac12(a_0+a_1)$,
	\[ \phi^1_{p,q}(Z_n \le a) \le Ce^{-2c|F_n|} \qquad\text{for some }C,c>0\text{ and for all }n \ge 1 .\]
	By Markov's inequality and the fact that $\phi^1_{p,q}$ is a wired-DLR Gibbs measure, 
	\[ \phi^1_{p,q}\big(T_n\big) \le Ce^{-c|F_n|}, \qquad\text{where }T_n := \Big\{ \tau \in \{0,1\}^E : \rho^{\tau}_{F_n}(Z_n \le a) \ge e^{-c|F_n|} \Big\} .\]

	Let us show that the all $0$ configuration $\zero$ belongs to $T_n$ for large $n$.
	By monotonicity of the specification $\rho$, $T_n$ is a decreasing set for each $n$, and thus it suffices to show that $T_n$ is non-empty for large $n$. Indeed, by Markov's inequality and the fact that $\phi^0_{p,q}$ is {\em also} a wired-DLR state,
	\begin{align*}
	 1-\phi^0_{p,q}\big(T_n\big)
	  &= \phi^0_{p,q}\Big(\rho^{\omega}_{F_n}(Z_n \le a) < e^{-c|F_n|}\Big) \\
	  &\le \phi^0_{p,q}\Big(\rho^{\omega}_{F_n}(Z_n \le a) \le o(1)\Big) \\
	  &= \phi^0_{p,q}\Big(\rho^{\omega}_{F_n}(Z_n > a) \ge 1-o(1)\Big) \le (1+o(1)) \cdot \phi^0_{p,q}\Big(\rho^{\omega}_{F_n}(Z_n > a)\Big) \\ 
	  & \le  (1+o(1))\cdot \phi^0_{p,q}\big(Z_n > a\big) \le (1+o(1))\tfrac{a_0}{a} .
	\end{align*}
	Since $a_0 < a$, we conclude that $T_n$ is non-empty for large $n$.
	Let $\Omega^\zero_{\partial F_n}$ denote the set of configurations that equal zero on $\partial F_n$.
	Observe that $\rho^{\tau}_{F_n}=\rho^{\zero}_{F_n}$ for all $n$ and $\tau \in \Omega^\zero_{\partial F_n}$, as the free boundary conditions decouple the measure in $F_n$ from the state of $\omega$ on $F_n^c \setminus \partial F_n$. Thus, $\Omega^\zero_{\partial F_n} \subset T_n$ for large $n$, so that
	\[ \phi^1_{p,q}\big(\omega_{\partial F_n} = 0\big) = \phi^1_{p,q}\big(\Omega^\zero_{\partial F_n}\big) \le \phi^1_{p,q}(T_n) \leq  Ce^{-c|F_n|} \qquad\text{for large }n .\]
	On the other hand, by finite energy, we have the lower bound
	\[ \phi^1_{p,q}\big(\omega_{\partial F_n} = 0\big) \ge \left(\frac{p}{p + (1-p)q}\right)^{|\partial F_n|} \qquad\text{for all }n .\]
	Since $\frac{|\partial F_n|}{|F_n|} \to 0$ as $n \to \infty$, we have reached a contradiction. This completes the first item in the case where $\Gamma$ acts transitively on $E$.

To handle the case where $\Gamma$ acts on $E$ quasi-transitively, we let $(O_1,\dots O_k)$ be the (finitely many) orbits of $E$ under $\Gamma$. By possibly extracting a subsequence, we may assume that $r_i = \lim_{n \to \infty} |O_i \cap F_n|/|F_n|$ exists for each $i$. We now define $a_j = \sum_{i=1}^k r_i \cdot \phi^j_{p,q}(\omega_{e_i})$, where $e_i$ is an arbitrary element of $O_i$. By construction, $\phi^j_{p,q}(Z_n)$ converges to $a_j$, and the proof goes through, as above.

	We now turn to the second item of the theorem. The proof given above adapts to this case with some modification which we now explain. First, we choose $F_n$ differently. By definition of c-amenable, there exists a sequence $(F'_n,H'_n)_n$ of non-empty finite subsets of $\VV$ such that $H'_n$ is connected, $\partial F'_n \subset H'_n$ and $|H'_n|/|F'_n| \to 0$ as $n\to\infty$. Let $H_n$ denote the set of edges incident to a vertex in $H'_n$, and let $F_n$ denote the set of edges incident to a vertex in $F'_n$ but not in $H_n$. Then $H_n$ is a connected set of edges, which contains $\partial F_n$ and is disjoint from $F_n$. We replace the occurrences of $\partial F_n$ in the proof with $H_n$. The proof then goes through once we interchange the roles of free and wired, replace the all zero configuration $\zero$ with the all one configuration~$\1$, and replace $Z_n$ by $(1- Z_n)$. The reason for the $H_n$ in this case (and not merely $\partial F_n$) is that, while the $\zero$ configuration acts as a strong `decoupling boundary condition' in the sense that $\rho^{\tau}_V=\rho^{\zero}_V$ whenever $\tau_{\partial V} = 0$, the $\1$ configuration has a weaker decoupling property: in order to force `true wired boundary condition' in the sense that $\rho^{\tau}_V=\rho^{\1}_V$, it is not sufficient to merely have $\tau_{\partial V} = 1$, but rather one needs $\tau_H = 1$ on a set $H$ which connects $\partial V$ from outside of $V$. This is the reason for our assumption of c-amenability. 
\end{proof}

\subsection{The Potts model}
In this section, we prove Theorem~\ref{cor:potts-coding}.
Due to the relation between the Potts and random-cluster models (namely, the Edwards--Sokal coupling), the theorem follows (morally) from the results about the random-cluster (more specifically, the first and second items from Theorem~\ref{thm:fk-coding} and the third item from Theorem~\ref{thm:fk-fv}).
However, before proving Theorem~\ref{cor:potts-coding}, there is a technical issue we must face: Potts configurations belong to $\{1, \dots, q\}^{\VV}$, while random-cluster configurations belong to $\{0,1\}^E$. 
Let $\Gamma$ be a group acting on $\VV$ by automorphisms of $G$. If $\cG$ is the line graph of $G$, then $\Gamma$ can be canonically embedded in the automorphism group of $\cG$. In a slight abuse of notation, we allow $\gamma \in \Gamma$ to act on $E$ through this identification. This allows us to discuss factors from processes on $\VV$ to processes on~$E$.
The lemma below shows that we can produce any \iid\ process on $E$ using an \iid\ process on $\VV$ in a $\Gamma$-equivariant manner.

\begin{lemma}\label{lem:Vertextoedge}
Let $G$ be an infinite quasi-transitive graph and let $\Gamma$ denote its full automorphism group. Then any \iid\ process on $E$ is a finitary $\Gamma$-factor of an \iid\ process on $\VV$ with bounded coding radius.
\end{lemma}
\begin{proof}
Let $X = (X_e)_{e \in E}$ be an \iid\ process, where each $X_e$ takes values in a measurable space $(T,\cT)$. Let $\Delta$ be the maximal degree of $G$. We will show by direct construction that $X$ is a $\Gamma$-factor of the \iid\ process $(Y,Z)=(Y_v,Z_v)_{v \in \VV}$, where $Y_v$ and $Z_v$ are independent, $Y_v=(Y^1_v,\dots,Y^\Delta_v)$ is a collection of $\Delta$ \iid\ random variables having the same distribution as $X_e$, and $Z_v$ is a uniform random variable on $[0,1]$.

Define
\[ \psi \colon (T^\Delta \times [0,1])^\VV \to T^E \]
by
\[ \psi(y^1,\dots,y^\Delta,z)_{\{u,v\}} :=
\begin{cases}
 y^{|\{ w \sim u ~:~ z_u \le z_w \le z_v \} |}_u &\text{if }z_u < z_v\\
 y^{|\{ w \sim v ~:~ z_v \le z_w \le z_u \} |}_v &\text{if }z_u \ge z_v 
\end{cases} .\]
In words, the value associated to an edge $\{u,v\}$ is obtained as follows: $z$ induces an order on the vertices of $G$; the edge $\{u,v\}$ chooses its smaller endpoint with respect to this order -- say, $u$ -- and takes on the value $y^i_u$ for some $i \in \{1, \dots, \Delta\}$. To determine the value of $i$, the set of edges that chose $u$ is ordered according to the value of $z$ at the {\em other} endpoint -- the first edge takes $y^1_u$, the second $y^2_u$, etc.

If $z_u \neq z_v$ for any distinct $u,v \in \VV$, then no value of $y^i_u$ is assigned to more than one edge. Thus, we have $\psi(Y,Z) \eqd X$. Since it is clear that $\psi$ is $\Gamma$-equivariant and has coding radius at most~2, the lemma follows.
\end{proof}

We also require the following simple lemma.
\begin{lemma}\label{lem:ExpTailComposition}
The composition of finitary codings with (stretched-)exponential tails is also a finitary coding with (stretched-)exponential tails.
\end{lemma}
\begin{proof}
	Let $X$, $Y$ and $Z$ be processes on $\VV$.
    Let $\varphi$ be a coding from $Y$ to $X$ and let $\varphi'$ be a coding from $Z$ to $Y$, both having (stretched-)exponential tails. Denote $\tilde{\varphi} = \varphi' \circ \varphi$.
    We denote the coding radii of $\varphi$, $\varphi'$ and $\tilde{\varphi}$ at $v$ by $R_v$, $R'_v$ and $\tilde{R}_v$, respectively.
    
	We first handle the case of exponential tails.    
    Let $C,c>0$ be such that $\Pr(R_v>r) \le Ce^{-cr}$ and $\Pr(R'_v>r) \le Ce^{-c'r}$ for all $r>0$.
    Let $\Delta$ be the maximal degree of $G$ and set $a := c/[2 (c + \log \Delta)]$.
    Fix $v \in \VV$ and define
\[ S_{v,r} := \bigcap_{u \in \VV_{v,ar}} \{ R_u \le (1-a)r \}. \]
By the union bound and the definition of $a$,
\[ \Pr[S_{v,r}^c] \leq B(ar) \cdot Ce^{-c(1-a)r} \leq C\Delta^{ar} e^{-c(1-a)r} \le Ce^{-c r /2} .\]
It straightforward to see that on the event $\{R'_v  \le ar \} \cap S_{v,r}$, we have that $\tilde{R}_v \le r$. Thus,
\[ \Pr[\tilde{R}_v > r] \leq \Pr[S_{r,v}^c] + \Pr[R'_v>ar] \leq Ce^{-cr /2} + Ce^{c' a r} . \]
This completes the proof in the case of exponential tails.

The case of stretched-exponential tails follows in a similar same way, where we let $C>0$ and $0<c,c'<1$ be such that $\Pr(R_v>r) \le Ce^{-r^c}$ and $\Pr(R'_v>r) \le Ce^{-r^{c'}}$, and we set $a := r^{\frac c2-1} / \log \Delta$.
\end{proof}

\begin{proof}[Proof of Theorem \ref{cor:potts-coding}]
Let $\Gamma$ be the full automorphism group of $G$; we set $p = 1 - e^{-\beta}$, and assume that $\mu^0_{\beta,q} = \mu^1_{\beta,q} = \dots = \mu^q_{\beta,q}$. Using the Edwards-Sokal coupling (see~\cite[Theorem~4.91]{grimmett2006random}), this implies that $\phi^0_{p,q} = \phi^1_{p,q}$. By Theorem~\ref{thm:fk-coding}, there exists an \iid\ process $Y$ on the edges, taking values in some measurable space $(T, \mathcal{T})$, and a finitary coding $\varphi: T^{E} \to \{0,1\}^{E}$ from $Y$ to $\phi_{p,q}$. Since $\varphi$ is invariant under any automorphism of the line graph of $G$, we have that $\varphi\circ \gamma = \gamma \circ \varphi$ for any $\gamma \in \Gamma$ (since $\Gamma$ is canonically embedded in the automorphism group of the line graph).

Next, we wish to construct the Edwards--Sokal coupling in a $\Gamma$-equivariant manner. Define
\[ \Psi \colon [0,1]^\VV \times \{1,\dots,q\}^{\VV} \times \{0,1\}^{E} \to \{1,\dots,q\}^{\VV} \]
by
\[ \Psi(z,\sigma,\omega)_v := \sigma_u, \qquad\text{where }u = \text{argmin}\{ z_w :  w \xleftrightarrow{\omega} v \} ,\]
where we recall that $w \xleftrightarrow{\omega} v$ indicates that there exists a path of $\omega$-open edges connecting $w$ and~$v$. Heuristically, $\Psi(z,\sigma,\omega)_v$ outputs the color $\sigma_u$, where $u$ is the vertex in the connected component of $v$ which has the minimal $z$ value. By construction, $\Psi$ is $\Gamma$-equivariant. Let $(Z,\Sigma)$ be an \iid\ process on $\VV$, where $Z_v$ and $\Sigma_v$ are independent and uniform on $[0,1]$ and $\{1,\dots,q\}$, respectively. Then the Edwards--Sokal coupling (see~\cite[Theorem~4.91]{grimmett2006random}) implies that $\Psi(Z,\Sigma,\omega) \eqd \mu_{\beta,q}$ whenever $\omega$ is sampled from $\phi_{p,q}$ independently of $(Z,\Sigma)$. This implies that $\mu_{\beta,q}$ is \ffiid.

For the second item, we assume that $\beta < \beta_c^{\mathrm{w}}(q)$. This implies that $p< p_c(q)$, which implies (by the third item of Theorem \ref{thm:fk-coding}) that the coding radius of $\varphi$ has exponential tails. Suppose now that $Y$ and $(Z,\Sigma)$ are independent. Then the composition $\Psi \circ (\text{id},\text{id},\varphi)$ is a coding from $(Z,\Sigma,Y)$ to $\mu_{\beta,q}$. By Lemma~\ref{lem:Vertextoedge}, this implies that we can create a coding $\Psi'$ from an \iid\ process on $\VV$ to the Potts model $\mu_{\beta,q}$. By Lemma~\ref{lem:ExpTailComposition}, this coding has exponential tails, proving the second item of the theorem.

For the final item, we set $G = \Z^d$ and $\Gamma$ to be the translation group of the lattice. The extra structure here allows us to skip the more complicated constructions above and do things `by hand.' Let $\{e_1,\dots,e_d\}$ denote the the standard basis of $\Z^d$. Any $e \in E$ has a unique representation $e = \{v,v+ e_i\}$, where $v \in \Z^d$ and $1 \le i \le d$.
Define $\tilde{\psi}: (T^d)^{\VV} \to  T^{E}$ by 
\[
\tilde{\psi}(y^1, \dots, y^d)_e := y_v^i, \qquad \text{ where } e = \{v,v+e_i\}.
\]
We also define $\tilde{\Psi}: \{1, \dots, q\}^{\VV} \times \{0,1\}^E$ by 
\[
\tilde{\Psi}(\sigma,\omega)_v := \sigma_u \qquad \text{ where } u = \min \{w : w \xleftrightarrow{\omega} v\}, 
\]
where the minimum over vertices is taken in the lexicographical order on $\mathbb{Z}^d$. Both $\tilde{\psi}$ and $\tilde{\Psi}$ are $\Gamma$-equivariant, as the lexicographical order is translation-invariant. Theorem~\ref{thm:fk-fv} gives us a $\Gamma$-\fvffiid\ coding $\tilde{\varphi}$ for $\phi_{p,q}$ with stretched-exponential tails. Then $\tilde{\Psi} \circ (\text{id},\tilde{\varphi} \circ \tilde{\psi})$ is a $\Gamma$-\fvffiid\ coding for $\mu_{\beta,q}$.
By Lemma~\ref{lem:ExpTailComposition}, the coding radius of this map has stretched-exponential tails, completing the proof.
\end{proof}

\subsection{The loop $O(n)$ model}

In this section, we prove Theorem~\ref{thm:loop-model}.

We define the so-called \emph{spin representation} of the loop $O(n)$ model as follows: set $\VV:=\bbT$ and $S:=\{+,-\}$. For any $\sigma \in S^{\VV}$, define the probability measure~$\mu_V^\tau$ defined by the formula
\begin{equation*}
\label{eq:dilute-Potts-measure}
\mu_V^\tau (\sigma) :=\frac{n^{k(\sigma)} x^{e(\sigma)}}{{\bf Z}_V^\tau} \cdot \1_{\{\sigma_{\VV \setminus V}=\tau_{\VV \setminus V}\}}  ,\,
\end{equation*}
where~$k(\sigma)+1$ is the sum of the number of connected components of pluses and minuses in~$\sigma$ that intersect $V$ or its neighborhood, $e(\sigma) := \sum_{u \sim v} \1_{\sigma_u \neq \sigma_v}$ is the number of edges $\{u,v\}$ that intersect $V$ and have $\sigma_u \neq \sigma_v$, and ${\bf Z}_V^\tau$ is the unique constant making $\mu_V^\tau$ a probability measure. Clearly, both~$k(\sigma)$ and~$e(\sigma)$ depend on~$V$, but we omit it in the notation for brevity. 

The spin representation is related to the original model in the following manner: if $\sigma$ is distributed as $\mu_V^\tau$, then its `domain walls', or the lines that separate $+$ from $-$, are distributed as $\nu_{\Omega,n,x}$ (see \cite[Proposition 3]{duminil2017macroscopic} for more details).

\begin{proof}[Proof of Theorem \ref{thm:loop-model}]
The family of measures $\{\mu_V^{\tau}\}_{V \subset \VV \text{ finite}, \, \tau \in \Omega^+\cup \Omega^-}$ defines an upwards-downwards specification. It is shown in~\cite[Theorem~4]{duminil2017macroscopic} that the spin representation is monotonic whenever $n \ge 1$ and $nx^2 \le 1$. Thus, we can define the infinite-volume limits $\mu^+$ and $\mu^-$, which are the largest and smallest possible measures, respectively. The domain walls of $\mu^+$ and $\mu^-$ are both distributed as $\nu_{n,x}$, the unique periodic Gibbs measure of the loop $O(n)$ model. The operation that maps a spin configuration to its domain walls has a finite coding radius, and therefore Lemma~\ref{lem:Vertextoedge} allows us to transfer coding properties of $\mu^+$ or $\mu^-$ to $\nu_{n,x}$. 
 
The upwards-downwards specification satisfies the finite-energy property, and is therefore irreducible, and is clearly invariant under all automorphisms of $\bbT$. We also note that the automorphism group of the line graph of $\mathbb{H}$ (which is isomorphic to the Kagome lattice) is naturally identified with the automorphism group of $\bbT$. With this in mind, the first part of Theorem \ref{thm:loop-model} follows from the first item of Theorem \ref{thm:coding-for-specs}. For the second part of Theorem \ref{thm:loop-model}, we note that $\mu^+ = \mu^-$ when $n \in [1,2]$ and $x =x_c(n)$, as was shown in \cite{duminil2017macroscopic} (see the last sentence in the paragraph after Theorem 5 in that paper), so that the result follows from the second item of Theorem \ref{thm:coding-for-specs}.
\end{proof}

\subsection{Long-range Ising models}

In this section, we prove Theorem~\ref{thm:ising-coding}. By Theorem \ref{thm:coding-for-specs}, the long-rang Ising model satisfies the desired coding properties if $\mu^+$ and $\mu^-$ are the limit measures of a monotone, irreducible $\Gamma$-invariant upwards-downwards specification. As was mentioned in the discussion above Theorem \ref{thm:ising-coding}, the monotonicity is a classical consequence of Griffith's inequality. Irreducibility follows from the finite-energy property, using the fact that the coupling constants are summable. Finally, $\Gamma$-invariance of the model is a starightforward consequence of the definition of the model and the assumption that the coupling constants are  $\Gamma$-invariant. \qed

\section{Construction of codings via coupling-from-the-past}\label{sec:MainCoupling}

In this section, we introduce a dynamics, stemming from a single-site heat-bath Glauber dynamics, on a general spin system with upwards and downwards specification. The dynamics is defined in any finite volume, given any starting state, and with one of two possible boundary conditions, corresponding to the largest and smallest configurations on the complement. Crucially, the construction ensures that this dynamics couples together all such choices simultaneously. In addition, the dynamics is monotone in the sense that it maintains the partial order on the spin space. The dynamics allows us to use coupling-from-the-past to manufacture an almost-sure limit, which will be the desired coding map. This procedure leads to a proof of Theorem \ref{thm:coding-for-specs}.

\subsection{Overview of the dynamics}
We begin with an informal description of the dynamics, which play a central role in this section.

Consider the sequence of finite graphs $(\VV_{v,r})_{r \in \mathbb{N}}$ and a monotonic irreducible upwards (or upwards-downwards) specification $\rho$. We will define a natural single-site dynamics on each $\VV_{v,r}$, called the $+$ dynamics. A single step of this dynamics started at an arbitrary initial configuration $\omega^{(0)} \in \Omega_{\VV_{v,r}}^{\bplus}$ is defined by applying the following evolution:
\begin{itemize}
\item Order $\VV$ in a $\Gamma$-invariant way and consider the order induced on $\VV_{v,r} = \{v_1,\dots v_m\}$.
\item Obtain $\omega^{(1)}$ from $\omega^{(0)}$ by resampling the value at $v_1$, i.e., $\omega^{(1)}$ is sampled from $\rho^{\omega^{(0)}}_{v_1}$.
\item Repeat inductively, resampling $\omega_{v_k}$ using $\omega^{(k-1)}$, until all sites have been resampled.
\item The final configuration $\omega^{(m)}$ is the new state.
\end{itemize}
When $\rho$ is an upwards-downwards specification, the above can be applied to configurations in $\Omega_{\VV_{v,r}}^{\bminus}$, producing the $-$ dynamics.

In Section~\ref{sec:dynamics}, we construct the $+$ dynamics on $\VV_{v,r}$ so that all initial configurations $\xi$ in $\Omega^+_{\VV_{v,r}}$ are coupled at all times. The irreducibility assumption ensures the dynamics constructed above are ergodic for any fixed $r$. Thus, in the limit as the number of steps of the dynamics tends to infinity, the distribution converges to $\rho^{\bplus}_{\VV_{v,r}}$. Taking $r$ to infinity as well (in a suitable manner) gives convergence in distribution to $\mu^+$. The method of coupling-from-the-past allows us to move from distributional limits to stronger notions of convergence, and thus construct a coding for $\mu^+$, as will be seen in Section~\ref{sec:cftp}. Finally, in Section~\ref{sec:finitary-factor} we consider the $+$ and $-$ dynamics simultaneously (in a properly coupled manner), and deduce that, under the appropriate assumptions, the coding radius $R$ satisfies \eqref{eq:RadiusByTotalVariation}. This allows us to transfer quantitative control on the total variation distance between $\rho^{\bplus}_{\VV_{v,r}}$ and $\rho^{\bminus}_{\VV_{v,r}}$ to quantitative control on the coding radius; in particular, it shows that $\mu^+ = \mu^-$ is sufficient to prove that both measures are \ffiid.

\subsection{The coupled dynamics}
\label{sec:dynamics}

Let $Y=(Y_v)_{v \in \VV}$ be an \iid\ process and suppose that, for each $v \in \VV$,
\[ Y_v=(Y_{v,n})_{n \in \N} \]
is a collection of \iid\ random variables. Further suppose that $(\cA,\pi)$ and $(\cB,\theta)$ are two probability spaces and that, for each $v \in \VV$ and $n \in \N$,
\[ Y_{v,n}=(A_{v,n}, B_{v,n}) \]
are two independent random variables sampled from $\pi$ and $\theta$, respectively.
We denote $A_n:=(A_{v,n})_{v \in \VV}$ and $B_n:=(B_{v,n})_{v \in \VV}$.
The dynamics we construct are functions of $Y$ (specifically, the $n$-th step of the dynamics is a function of $A_n$ and $B_n$), which thus yields a coding $\varphi$ from $Y$ to $\mu^+$.
We now explain how to choose $(\cA,\pi)$, $(\cB,\theta)$ and $\varphi$.
To remain general, we not explicitly define $(\cA,\pi)$, $(\cB,\theta)$ and $\varphi$, but rather let them be arbitrary objects satisfying certain properties required for the proof. This gives us a framework which is sufficiently flexible to prove both Theorem~\ref{thm:coding-for-specs} and Theorem~\ref{thm:fv}. After each definition, we also provide constructions to ensure that the objects we require actually exist. In fact, these will be used for the proof of Theorem~\ref{thm:coding-for-specs}; more delicate versions of these constructions, in which $\cA$ and $\cB$ are finite, will be required for Theorem~\ref{thm:fv} (see Section~\ref{sec:space-time}).

As mentioned above, the dynamics we construct are a coupled version of single-site Glauber dynamics of the given upwards specification. We begin by selecting $(\cA,\pi)$ and a measurable function
\[ F:\Omega^+ \times \VV \times \cA \rightarrow S ,\]
which is used to define a single-site update. Specifically, we require that
\begin{itemize}
\item The random variable $F(\omega,v,\cdot)$ matches the specification at $v$:
\begin{equation}\label{eq:FPlusDef}
\pi\left(F(\omega,v,\cdot ) = s \right) = \rho^{\omega}_v [\sigma_v = s] \qquad\text{for any }\omega \in \Omega^+\text{ and }s \in S .
\end{equation}
\item $F$ is monotonic in $\omega$:
\begin{equation}\label{eq:FPlusMon}
F(\omega,v,a) \leq F(\omega',v,a) \qquad\text{for any }a \in \cA\text{ and }\omega,\omega'\in\Omega^+\text{ such that } \omega \leq \omega',
\end{equation}
\item $F$ is $\Gamma$-invariant:
\begin{equation}\label{eq:FPlusStabInvariant}
F(\omega,v,a) = F(\gamma\omega,\gamma v,a) \qquad\text{for any }a \in \cA,~ \omega \in \Omega^+\text{ and }\gamma \in \Gamma.
\end{equation}
\end{itemize}

At this point, we place no additional restrictions on $\cA$ (in Section~\ref{sec:space-time}, we will need $\cA$ to be finite). This allows to give a simple construction for $F$: set $\cA := [0,1]$, $\pi := \text{Leb}$, the Lebesgue measure on the interval, and, for any $\omega \in \Omega^+$ and $s \in S$, 
\[
a_*(\omega,v,s) := \rho^{\omega}_v [ \sigma_v < s] \qquad\text{and}\qquad a^*(\omega,v,s) := \rho^{\omega}_v [ \sigma_v \leq s] .
\]
It is straightforward to check that, for any $\omega \in \Omega^+$, $s' \in S$ and $a \in (a_*(\omega,v,s'),a^*(\omega,v,s'))$,
\[ \min \{ s \in S : a^*(\omega,v,s) \ge a\} = \max \{ s \in S : a_*(\omega,v,s) \le a\} = s'. \]
Therefore, choosing an arbitrarily $s_0 \in S$, we may now define $F$ by
\[
F(\omega,v,a) := \begin{cases}
 \min \{ s \in S : a^*(\omega,v,s) \ge a\} = \max \{ s \in S : a_*(\omega,v,s) \le a\} &\text{if }a \in \cA' \\
 s_0 &\text{otherwise}
 \end{cases} .
\]
where
\[
\cA' := \bigcap_{\omega \in \Omega^+} \bigcap_{v \in \VV} \bigcup_{s \in S} (a_*(\omega,v,s),a^*(\omega,v,s)) .
\]
This choice of $\cA'$ ensures that $F(\omega,v,a)$ is well defined; we note that $\cA'$ has Lebesgue measure one as its complement is the countable union of Lebesgue measure zero sets.
Using that the upwards specification is monotone and $\Gamma$-invariant, it follows from the definition of $F$ that~\eqref{eq:FPlusMon} and~\eqref{eq:FPlusStabInvariant} hold.
To see that~\eqref{eq:FPlusDef} holds, note that
\[ \pi\left(F(\omega,v,\cdot ) \le s \right) = \text{Leb}(\{ a \in [0,1] : a^*(\omega,v,s) \ge a \}) = a^*(\omega,v,s) = \rho^{\omega}_v [\sigma_v \le s]. \]

For any $v \in \VV$ and $a \in \cA$, we define
\[ F_{v,a} \colon \Omega^+ \to \Omega^+ \]
by
\[ F_{v,a}(\omega)_u := \begin{cases} F(\omega,v,a) &\text{if }u=v\\\omega_u &\text{otherwise} \end{cases} , \qquad \omega \in \Omega^+,~ u \in \VV .\]
Note that~\eqref{eq:FPlusStabInvariant} implies that $F_{v,a}(\omega)=F_{\gamma v,a}(\gamma \omega)$ for $\gamma \in \Gamma$ and $\omega \in \Omega^+$.

We now describe how to choose the updating sites. In our dynamics, the set of updated sites are deterministic; we must, however, be careful as to the {\em order} of the chain of single-site updates which make up a single step of the dynamics. The most straightforward way to order the sites is to associate a uniform $[0,1]$ random variable to each, and use the inherited linear order. This approach is very useful, but is slightly too rigid to allow us to study the coding properties we are interested in -- specifically, this will be an issue when we are looking for codings from a finite-valued \iid\ process. Thus, we give a more abstract definition.

Let $(\cB,\mathfrak{B})$ be a measurable space and let $\mathcal{O} \colon \cB^\VV \times \VV^2 \to \{0,1\}$ be measurable and $\Gamma$-invariant - i.e. 
\[
\mathcal{O}(\gamma \eta, \gamma u, \gamma v) = \mathcal{O}( \eta, u,v) \qquad \text{for all } \gamma \in \Gamma.
\]
We regard $\eta \in \cB^{\VV}$ as inducing via $\cO$ an order $\preceq_\eta$ on $\VV$, where $\mathcal{O} (\eta,u,v)=1$ indicates that $u$ precedes $v$ in this order. Formally, $\preceq_\eta$ is a binary relation on $\VV$, defined by
\begin{equation}\label{eq:order}
 u \preceq_\eta v \qquad\text{if and only if} \qquad \mathcal{O}(\eta,u,v)=1 .
\end{equation}
A general choice of $\mathcal{O}$ and $\eta$ does not result in a linear ordering -- or even a preorder, for that matter! For a probability measure $\theta$ on $\cB$, we say that $\mathcal{O}$ is $\theta$-compatible if $\preceq_\eta$ is almost surely a linear ordering, when $(\eta_v)_{v \in \VV}$ are \iid\ samples from $\theta$. If $\cB = [0,1]$ and $\theta$ is the uniform measure, we can choose $\mathcal{O}(\eta,u,v) = \mathbf{1}_{\eta_u \leq \eta_v}$. This function is clearly $\theta$-compatible, and recovers the simplest ordering described earlier.

Given a finite sequence $q=((v_1,a_1),\dots,(v_k,a_k)) \in (\VV \times \cA)^k$, we denote
\[ F_q := F_{v_1,a_1} \circ \dots \circ F_{v_k,a_k} .\]
Given $\nu \in \cA^{\VV}$, $\eta \in \cB^{\VV}$ such that $\preceq_{\eta}$ is a linear order, a vertex $v \in \VV$ and an integer $r \ge 0$, we define
\[ q(\nu, \eta, v,r) := ((v_1,a_1),\dots,(v_{m},a_{m})) ,\]
where
\[ m=|\VV_{v,r}|, \qquad \VV_{v,r} = \{v_1,\dots,v_{m}\}, \qquad v_1 \preceq_\eta \dots \preceq_\eta v_{m}, \qquad  a_i = \nu_{v_i} .\]
This gives rise to a coupled dynamics on $\Omega$, namely,
\[ F_{q(\nu, \eta, v,r)} \colon \Omega^+ \to \Omega^+ .\]

For any $v \in \VV$ and $r \in \N$, define
\[ Q^{+,v,r} \colon \Omega \to \Omega^{\bplus}_{\VV_{v,r}} \]
to be the natural projection, i.e. 
\[ Q^{+,v,r}(\omega)_u := \begin{cases}
 \omega_u &\text{if } u \in \VV_{v,r}\\
 + &\text{otherwise}
 \end{cases} .\]
This allows us to define the random function from $\Omega$ to $\Omega^+$:
\[ \tilde{f}_n^{+,v,r} := F_{q(A_n,B_n,v,r)} \circ Q^{+,v,r} .\]
The function $\tilde{f}_n^{+,v,r}$ describes the $n$th round of updates in the coupled dynamics. We also define 
\begin{equation}\label{eq:f-def}
 f^{+,v,r}_n := \tilde{f}^{+,v,r}_1 \circ \dots \circ \tilde{f}^{+,v,r}_n .
\end{equation}
There are two important things to note about $f_n^{+,v,r}$. First, the order of composition is reverse from the usual convention. This will prove essential to our construction. For further discussion, see Section~\ref{sec:cftp}. Second, for any $v$, $r$ and $n$, $f^{+,v,r}_n$ is a deterministic function of $Y$.

Having chosen our definitions carefully, we easily obtain the following.
\begin{lemma}\label{lem:ergodicity}
For any $r \ge 0$ and $v \in \VV$,
\[
f^{+,v,r}_n (\bplus) \xrightarrow{(d)} \rho^{\bplus}_{\VV_{v,r}} \qquad \text{as } n\rightarrow \infty.
\] 
\end{lemma}
\begin{proof}
The consistency relations of the upwards specification $\rho^+$ imply that $\rho^{\bplus}_{\VV_{v,r}}$ is stationary with respect to $F_{u,A}$ for any $u \in \VV_{v,r}$, where $A$ is sampled from $\pi$.  Since $Q^{+,v,r}$ is the identity map on $\Omega^{\bplus}_{\VV_{v,r}}$, $\rho^{\bplus}_{\VV_{v,r}}$ is also stationary with respect to $\tilde{f}^{+,v,r}_n$. 

We now consider the countable-state Markov chain 
\[
g_n^{+,v,r} :=\tilde{f}^{+,v,r}_n \circ \dots \circ \tilde{f}^{+,v,r}_1 ,
\]
given by composing the $\tilde{f}_n^{+,v,r}$ in the usual order. The chain is aperiodic (as $\tilde{f}^{+,v,r}_i(\bplus)$ equals $\bplus$ with positive probability) and irreducible (since $\rho^+$ is irreducible). Since there exists a stationary distribution, the Markov chain is ergodic on the states with positive $\rho^{\bplus}_{\VV_{v,r}}$ measure, and thus $g_n^{+,v,r}(\bplus)$ converges in distribution to $\rho^{\bplus}_{\VV_{v,r}}$. Since $g_n^{+,v,r}(\bplus)$ and $f_n^{+,v,r}(\bplus)$ have the same distribution, we are done.
\end{proof}

If we assume that $S$ has a minimal element $-$ and $\rho$ is an upwards-downwards specification, we may extend $F$ to $\Omega^+ \cup \Omega^-$, define a $-$ projection $Q^{-,v,r}$, and thus create $\tilde{f}_n^{-,v,r}$ and $f_n^{-,v,r}$ in order to define the $-$ dynamics. Lemma~\ref{lem:ergodicity} also applies to this dynamics.

\subsection{Monotonicity and existence of factors} \label{sec:cftp}
We can think of $f_n^{+,v,r}$ as a (random) function from $\Omega$ to $\Omega^{\bplus}_{\VV_{v,r}}$ which inherits several monotonicity properties from the upward specification $\rho^+$ (which must be stationary with respect to it).
\begin{lemma}\label{lem:mono}
The function $f_n^{+,v,r}(\omega)$ preserves the order in $\omega$ and is decreasing in~$r$. That is, for any $n,r \geq 0$ and $v \in \VV$, 
\[
f_n^{+,v,r}(\omega) \preceq f_n^{+,v,r}(\omega') \qquad \text{for any }\omega, \omega' \in \Omega\text{ such that } \omega \leq \omega',
\] 
and
\[
f_n^{+,v,r+1}(\omega) \preceq f_n^{+,v,r}(\omega) \qquad \text{for any }\omega \in \Omega. 
\] 
In particular,
\[
f_{n+1}^{+,v,r}(\boldsymbol{+}) \leq f_{n}^{+,v,r}(\boldsymbol{+}) .
\]
\end{lemma}
\begin{proof}
Thanks to \eqref{eq:FPlusMon}, we know that $F_{u,a}(\omega) \leq F_{u,a}(\omega')$ for all $a \in \cA$, $u \in \VV_{v,r}$, and $\omega, \omega' \in \Omega^+$ such that $\omega \leq \omega'$. Furthermore, $Q^{+,v,r}$ also maintains the order on $\Omega$ for fixed $v$ and $r$, and we conclude that $f_n^{+,v,r}$, a composition of monotone functions, is also monotonic.

We now turn to prove the second monotonicity statement. Since $Q^{+,v,r}$ is decreasing in $r$, it suffices to show that, almost surely,
\[ F_{q(A_n,B_n,v,r+1)}(\omega) \le F_{q(A_n,B_n,v,r)}(\omega) \qquad\text{for all }\omega \in \Omega^{\bplus}_{\VV_{v,r}} .\]
Fix $\nu \in \cA^{\VV}$ and $\eta \in \cB^{\VV}$ such that $\preceq_\eta$ is a total order on $\VV$.
Write $\VV_{v,r}=\{v_1,\dots,v_{B(r)}\}$ and $\VV_{v,r+1}=\{u_1,\dots,u_{B(r+1)}\}$, where $v_1 \preceq_\eta \dots \preceq_\eta v_{B(r)}$ and $u_1 \preceq_\eta \dots \preceq_\eta u_{B(r+1)}$.
It is clear that $v_i=u_{i'}$ and $v_j=u_{j'}$ then $i \le j$ if and only if $i' \le j'$.
Therefore, for some functions $G_i \colon \Omega^+ \to \Omega^+$ such that $G_i(\omega)_{\VV_{v,r}}=\omega_{\VV_{v,r}}$ for all $\omega \in \Omega^+$, we have
\begin{align*}
    F_{q(\nu,\eta,v,r)}
     &= \phantom{~G_0 \circ\,}F_{v_1,\nu_{v_1}}\phantom{ \circ G_1~}\circ F_{v_2,\nu_{v_2}} \phantom{ \circ G_2~}\circ \dots \circ F_{v_{B(r)},\nu_{v_{B(r)}}}\phantom{~\circ G_{B(r)}} \\
    F_{q(\nu,\eta,v,r+1)}
     &= G_0 \circ F_{v_1,\nu_{v_1}} \circ G_1 \circ F_{v_2,\nu_{v_2}} \circ G_2 \circ \dots \circ F_{v_{B(r)},\nu_{v_{B(r)}}} \circ G_{B(r)} .
\end{align*}
Observe that, for any $\omega \in \Omega^{\bplus}_{\VV_{v,r}}$, we have $G_i(\omega) \leq \omega$ and $F_{w,\eta_w}(\omega) \in \Omega^{\bplus}_{\VV_{v,r}}$ for any $w \in \VV_{v,r}$. Thus, given $\omega \in \Omega^{\bplus}_{\VV_{v,r}}$, removing every composition with $G_i$ from the above sequence defining $F_{q(\nu,\eta,v,r+1)}(\omega)$ only increases the output, showing that $F_{q(\nu,\eta,v,r+1)}(\omega) \le F_{q(\nu,\eta,v,r)}(\omega)$, as desired.

The final inequality now follows, since $\bplus$ is the maximal element of $\Omega$:
\begin{align*}
f_{n+1}^{+,v,r}(\boldsymbol{+}) & = f_{n}^{+,v,r}(\tilde{f}_{n+1}^{+,v,r} (\boldsymbol{+})) \leq f_{n}^{+,v,r}(\boldsymbol{+}). \qedhere
\end{align*}
\end{proof}

We deduce a simple but crucial consequence of the above lemma and the definition of $f_n^{+,v,r}$:
\begin{corollary}\label{cor:ASconvergence}
The random field
\[
\sigma^{+,v,r} :=\lim_{n \rightarrow \infty } f_{n}^{+,v,r}(\boldsymbol{+}) 
\]
is defined almost-surely and has the distribution $\rho^{\bplus}_{\VV_{v,r}}$. Furthermore, if $\mu^+$ is a probability measure,
\[
\sigma^+:= \lim_{r\rightarrow \infty} \sigma^{+,v,r} = \lim_{n,r \rightarrow\infty } f_{n}^{+,v,r}(\boldsymbol{+})
\]
is also a well-defined random field, independent of $v$, with distribution $\mu^+$.
\end{corollary}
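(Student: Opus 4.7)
The plan is to establish the three assertions in order: almost-sure existence of $\sigma^{+,v,r}$ with its stated distribution, the existence of the $r \to \infty$ limit when $\mu^+$ is a probability measure, and finally independence of the basepoint $v$.

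For the first assertion, by the last inequality in Lemma~\ref{lem:mono}, the sequence $(f_n^{+,v,r}(\bplus))_n$ is pointwise non-increasing in $n$. Since $S$ is discrete and totally ordered, for each coordinate $u$ the sequence either stabilizes at some element of $S$ or eventually drops below every element of $S$. Lemma~\ref{lem:ergodicity} provides distributional convergence to $\rho^{\bplus}_{\VV_{v,r}}$, a probability measure supported on $S^{\VV_{v,r}}$, which rules out the latter behavior with positive probability. Consequently $\sigma^{+,v,r} := \lim_n f_n^{+,v,r}(\bplus)$ exists almost surely, takes values in $S^{\VV_{v,r}}$ (and equals $\bplus$ outside $\VV_{v,r}$ by construction), and by uniqueness of distributional limits has distribution $\rho^{\bplus}_{\VV_{v,r}}$.

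Second, the monotonicity $f_n^{+,v,r+1}(\bplus) \leq f_n^{+,v,r}(\bplus)$ of Lemma~\ref{lem:mono} passes to the $n \to \infty$ limit, yielding $\sigma^{+,v,r+1} \leq \sigma^{+,v,r}$. Hence $(\sigma^{+,v,r})_r$ is pointwise non-increasing and, by the same reasoning as above combined with the fact that $\rho^{\bplus}_{\VV_{v,r}}$ converges weakly to $\mu^+$ by definition and $\mu^+$ is assumed to be a probability measure on $S^{\VV}$, the pointwise limit $\sigma^+ := \lim_r \sigma^{+,v,r}$ exists in $S^{\VV}$ almost surely and has distribution $\mu^+$.

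The third assertion -- independence of the basepoint $v$ -- requires extending the monotonicity in Lemma~\ref{lem:mono} from neighbor balls to arbitrary nested finite sets. More precisely, I would prove that if $W \subseteq W'$ are finite subsets of $\VV$ and $f_n^{+,W}$ is defined by replacing $\VV_{v,r}$ with $W$ throughout the construction, then $f_n^{+,W'}(\bplus) \leq f_n^{+,W}(\bplus)$ pointwise. The argument follows the proof of Lemma~\ref{lem:mono}: the $W'$-dynamics executes the same updates at sites of $W$ in the same relative $\preceq_\eta$-order, interleaved with additional updates at sites of $W' \setminus W$ where the $W$-dynamics keeps the value $+$. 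Each additional update replaces $+$ with some value $\leq +$, and by \eqref{eq:FPlusMon} this inequality propagates through all subsequent updates. Given this extension, for any $v, v' \in \VV$ the choice $r_2 := r_1 + \dist(v, v')$ ensures $\VV_{v, r_1} \subseteq \VV_{v', r_2}$, and hence $\sigma^{+, v', r_2} \leq \sigma^{+, v, r_1}$ on $\VV_{v, r_1}$. Letting $r_1 \to \infty$ and exploiting symmetry in $(v, v')$ yields almost-sure equality of the limits from different basepoints; the double limit $\lim_{n, r \to \infty} f_n^{+,v,r}(\bplus)$ then equals $\sigma^+$ by joint monotonicity in both variables. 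I anticipate the main subtlety to lie in this generalization of Lemma~\ref{lem:mono} to arbitrary nested sets rather than in the incremental $r \to r+1$ case, though the underlying mechanism is unchanged: any extra single-site update from $+$ only lowers the configuration, and this ordering is preserved by every subsequent monotone update.
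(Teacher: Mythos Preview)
Your proposal is correct and follows essentially the same approach as the paper's proof: use the monotonicity from Lemma~\ref{lem:mono} to get almost-sure limits, and use Lemma~\ref{lem:ergodicity} together with the assumption that $\mu^+$ is a probability measure to identify the distributions and rule out escape from $S$. The one place where you are more explicit than the paper is the independence of the basepoint $v$: the paper simply writes ``thanks to the monotonicity of the specifications, the resulting limit is independent of $v$,'' whereas you spell out the underlying mechanism by extending Lemma~\ref{lem:mono} from the inclusion $\VV_{v,r}\subset\VV_{v,r+1}$ to arbitrary nested finite sets $W\subset W'$ and then sandwiching via $\VV_{v,r_1}\subset\VV_{v',r_1+\dist(v,v')}$. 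This extension is indeed what is implicitly being used, and your sketch of it (same updates on $W$ in the same relative order, interleaved with extra updates on $W'\setminus W$ that can only decrease the configuration) is exactly the argument already given in the proof of Lemma~\ref{lem:mono} with $\VV_{v,r}$ and $\VV_{v,r+1}$ replaced by $W$ and $W'$.
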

\begin{proof}
The sequence $\{f_n^{+,v,r}(\boldsymbol{+})\}_n$ is decreasing, and must have an almost-sure limit, taking value in $\overline{S}^{\VV}$ for some possibly larger $\overline{S} \supset S$ (if $S$ is not finite, and thus not compact in the discrete topology, we cannot be sure that the limit is supported on $\Omega$ {\em a priori}). However, by Lemma \ref{lem:ergodicity}, the distribution of the limiting random variable is known to be $\rho^{\bplus}_{\VV_{v,r}}$, which is supported on $\Omega^+$, giving the desired result.

By Lemma \ref{lem:mono}, $\sigma^{+,v,r}$ is decreasing in $r$, meaning it, too, has an almost-sure limit in a possibly larger space as $r$ goes to infinity. Thanks to the monotonicity of the specifications, the resulting limit is independent of $v$. Since $\mu^+$ is defined by exhaustion, $\sigma^+$ must have distribution $\mu^+$, meaning $\sigma^+$ is almost-surely supported in $\Omega$. To complete the proof, we note that the array $\{f_n^{+,v,r}(\bplus)\}_{n,r}$ is monotonically decreasing pointwise in both $n$ and $r$, and thus extracting any diagonal sequence maintains the almost-sure convergence properties above.
\end{proof}

\begin{remark}[Coupling-from-the-past]\label{remark:2}
Let us momentarily reconsider the standard, "forward" dynamics given by $g_{n}^{+,v,r}:= \tilde{f}_n^{+,v,r} \circ \dots \circ \tilde{f}_1^{+,v,r}$. In distribution, the random variables $f_n^{+,v,r}(\boldsymbol{+})$ and $g_n^{+,v,r}(\boldsymbol{+})$ are identical for any fixed $n$. However, $g_n^{+,v,r}(\boldsymbol{+})$ cannot be monotonic in $n$ - if our configuration $\boldsymbol{+}$ evolved to be some different configuration $\omega$ at time $n$, there is no reason to believe that the next step in the dynamic is smaller than $\omega$! In fact, $g_n^{+,v,r}(\boldsymbol{+})$ does not have an almost-sure limit, as it continues changing after every application of the $g_n$.

On the other hand, $f_n^{+,v,r}(\omega)$ is defined from the past, and can be thought of as evaluating the forward dynamics at time 0, with `initial' conditions of $\omega$ at time $-n$. Since $f_n^{+,v,r}$ is a function from $\Omega$ to $\Omega^{+,v,r}$, we can sample $f_{n+1}^{+,v,r}$ given $f_n^{+,v,r}$ by taking $\omega$ to the (random) configuration $\tilde{f}_{n+1}^{+,v,r}(\omega)$, and then mapping it to the (deterministic, given the conditioning) configuration assigned to it by $f_n^{+,v,r}$. This concatenation construction is crucially important for the final inequality of Lemma \ref{lem:mono}, and is the conceptual justification for the existence of almost-sure limits in coupling-from-the-past. 
\end{remark}

\begin{proof}[Proof of Theorem~\ref{thm:coding-for-specs}, item 1]
The existence of an \iid\ coding follows by explicit construction: for any $v \in \VV$, $\sigma^+_v = \lim_{n,r \rightarrow \infty}f_{n}^{+,v,r}(\boldsymbol{+})_v$. Since $\mu^+$ is a probability measure by assumption, Corollary~\ref{cor:ASconvergence} shows that $\sigma^+$ is distributed as $\mu^+$; the $\Gamma$-invariance of the specifications implies that $\sigma^+$ is a deterministic and $\Gamma$-equivariant function of~$Y$.
\end{proof}

\subsection{Finitary factors via quantitative bounds on coding radius}
\label{sec:finitary-factor}

For this section, we assume that $S$ is a finite spin space and that $\rho$ is a monotone $\Gamma$-invariant irreducible upwards-downwards specification. In this case, $S$ has both a maximal and minimal element, and both $\mu^+$ and $\mu^-$ are probability measures.

As mentioned above, the construction in Section~\ref{sec:dynamics} extends to upwards-downwards specifications. Observe that $f_n^{-,v,r}$ enjoys similar properties as $f_n^{+,v,r}$, with the notable difference that $f_n^{-,v,r}(\omega)$ is \emph{increasing} in $r$ (it still preserves the order in $\omega$). As in Corollary~\ref{cor:ASconvergence}, $\sigma^{-,v,r}$ and $\sigma^-$ are defined almost surely and are distributed as $\rho^{\bminus}_{\VV_{v,r}}$ and $\mu^-$, respectively.

We stress that the $+$ and $-$ dynamics are coupled as they are both defined through the same process $Y$. In particular, almost surely,
\[
f_n^{-,v,r}(\omega) \le f_n^{+,v,r}(\omega') \quad \text{ for any } v \in \VV,~ r \in \mathbb{N}\text{ and }\omega, \omega' \in \Omega\text{ such that } \omega \leq \omega' .
\] 
This implies that, almost surely,
\[
\sigma^{-,v,r} \leq \sigma^{-} \leq \sigma^{+} \leq  \sigma^{+,v,r} \quad \text{ for any } v \in \VV.
\]

For finite spin spaces, we have the following lemma which relates the probability of disagreement under monotone couplings to total-variation bounds:
\begin{lemma}\label{lem:TV}
Let $X$ and $Y$ be random variables taking value in a totally ordered, finite spin space~$S$. If $\mathbb{P}[X \leq Y] =1$, then 
\[
\mathbb{P}[X \neq Y] \leq (|S|-1) \cdot \|X-Y \|_{TV}.  
\]
\end{lemma}
\begin{proof}
Identifying $S$ with the set $\{0, \dots, |S|-1\}$, we see that, by Markov's inequality, 
\begin{align*} 
\Pr[X \neq Y] = \Pr[Y - X \geq 1] \leq \E[Y - X] = \E_{\text{op}}[Y-X] \leq (|S|-1) \Pr_{\text{op}}[X \neq Y], 
\end{align*}
where $\Pr_{\text{op}}$ is an optimal coupling between $X$ and $Y$. In an optimal coupling, the probability of two variables not matching is exactly the total variation distance, as required.
\end{proof}

\begin{proof}[Proof of Theorem~\ref{thm:coding-for-specs}, item 2]
We let $(\cA,\pi)$ and $F$ be as above. We let $(\cB,\theta)$ be the Lebesgue measure space on $[0,1]$ and set $\mathcal{O}(\eta,u,v) := \mathbf{1}_{\eta_u \leq \eta_v}$. With this choice, it is clear that $\sigma^{+,v,r}$ is measurable with respect to $Y_{\VV_{v,r}}$. Recall that $\sigma^+$ describes a coding from $Y$ to $\mu^+$. Our goal is then to bound its coding radius $R_v$.

Define the random variable 
\[
\tilde{R}_v := \min \big\{ r \ge 0 : \sigma^{+,v,r}_v = \sigma^{-,v,r}_v \big\},
\] 
where we set $\tilde{R} = \infty$ if the two spins do not agree for any $r$. Thus, $\sigma^+_v = \sigma^{+,v,r}_v$ for any $r \ge \tilde{R}_v$. Since $\sigma^{+,v,r}$ is independent of $(Y_u)_{u \not \in \VV_{v,r}}$ and $\tilde{R}_v$ is a stopping time with respect to the filtration of $(Y_{\VV_{v,r}})_r$, it is clear that $R_v \le \tilde{R}_v$.

By Lemma~\ref{lem:TV}, 
\[
\mathbb{P}[\tilde{R}_v >r]  = \mathbb{P}[\sigma^{+,v,r}_v \neq \sigma^{-,v ,r}_v] \leq (|S|-1) \cdot \big\|\rho^{\bplus}_{\VV_{v,r}}(\sigma_v \in \cdot) - \rho^{\bminus}_{\VV_{v,r}}(\sigma_v \in \cdot)\big\|_{TV},
\]
as required. In particular, if $\mu^+ = \mu^-$, it is clear the total variation distance must vanish as $r \to \infty$, so that $R_v$ is almost-surely finite.
\end{proof}

\section{Weak spatial mixing implies exponential mixing in time}
\label{sec:mixing}

In this section, we prove bounds on the mixing-time of the dynamics considered in Section~\ref{sec:MainCoupling}. Throughout this section, we will assume that $S$ is finite and that we are given a monotone $\Gamma$-invariant irreducible upwards-downwards specification $\rho$. In particular, we use the coupled~$+$ and~$-$ dynamics as in Section~\ref{sec:finitary-factor}.

A monotone $\Gamma$-invariant (upwards-downwards) specification $\rho$ is said to satisfy \emph{weak spatial mixing} with rate $c>0$ if~\eqref{eq:weak-mixing-cond} holds for some $C$.
Martinelli and Olivieri \cite{martinelli1994approach} show that when $G=\Z^d$ and $\rho$ is a monotone $\Gamma$-invariant specification satisfying a \emph{finite-range assumption} and a \emph{finite-energy assumption}, weak spatial mixing implies that the mixing-time of the single-site Glauber dynamics (as considered in Section~\ref{sec:MainCoupling}) has exponential tails (their setting is a continuous-time dynamics on $\Z^d$, but the proof easily adapts to our discrete-time dynamics on $\Z^d$). In our notation, this means that the total-variation distance between $\lim_{r \to \infty} f_n^{+,v,r}(\bplus)_v$ and $\lim_{r \to \infty} f_n^{-,v,r}(\bminus)_v$ is exponentially small in~$n$. Using the order constructed in Section~\ref{sec:dynamics}, we can see that the finite range assumption implies that $f_n^{+,v,n}(\bplus)_v$ and $f_n^{-,v,n}(\bminus)_v$ are exponentially close, as both $f_n^{+,v,r}(\bplus)$ and $f_n^{-,v,r}(\bminus)$ depend only on $\{A_{u,i},B_{u,i}\}_{u \in \VV_{v,Cr}, i \le n}$ for some constant $C>0$ depending on the range of the specification.

We extend the result of Martinelli--Olivieri in a number of directions. First, we allow an arbitrary quasi-transitive graph $G$ of sub-exponential growth (though we require a slightly stronger quantitative bound on the rate of growth for the full conclusion). Second, we drop the finite-range and finite-energy assumptions, requiring only an irreducibility assumption. Third, we work with a monotone upwards-downwards specification, instead of a (usual) monotone specification. Lastly, we keep track not only of the amount of time required until mixing, but also the amount of \emph{space} (in the graph $G$) required.

\begin{theorem}\label{thm:mixing}
Let $G$ be an infinite graph and $\Gamma$ be a group acting quasi-transitively on $\VV$ by automorphisms of $G$.
Let $S$ be a totally ordered finite spin space and $\rho$ be a monotone $\Gamma$-invariant irreducible upwards-downwards specification that satisfies weak spatial mixing with rate $c>0$.
If $G$ has sub-exponential growth, i.e., $B(r)=e^{o(r)}$ as $r \to \infty$, then
	\[
	\max_{v \in \VV} \big\|f_n^{+,v,n}(\bplus)_v - f_n^{-,v,n}(\bminus)_v\big\|_{TV} \le e^{-n^{1-o(1)}} \qquad\text{as }n \to \infty .
	\]
	Moreover, if there exists $\beta<c\log 2$ such that $B(r) \le e^{\frac{\beta r}{\log r}}$ for large $r$, then there exists $c'>0$ such that
	\[
	\max_{v \in \VV} \big\|f_n^{+,v,n}(\bplus)_v - f_n^{-,v,n}(\bminus)_v\big\|_{TV} \le e^{-c'n} \qquad\text{for large }n.
	\]
\end{theorem}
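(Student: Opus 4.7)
The proof adapts the recursive scale-analysis of Martinelli--Olivieri~\cite{martinelli1994approach} to the infinite-range, quasi-transitive setting, with additional care to track the joint space-time tradeoff. Throughout, we work with the coupled $+$ and $-$ dynamics from Section~\ref{sec:MainCoupling} and the monotonicity established in Lemma~\ref{lem:mono}.

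\textbf{Reduction to expected disagreement.} The almost-sure ordering $f_n^{-,v,n}(\bminus) \le \sigma^{-,v,n} \le \sigma^{+,v,n} \le f_n^{+,v,n}(\bplus)$ (from Corollary~\ref{cor:ASconvergence} and Lemma~\ref{lem:mono}) combined with the triangle inequality gives
$$\|f_n^{+,v,n}(\bplus)_v - f_n^{-,v,n}(\bminus)_v\|_{TV} \le \Delta_n^+(v) + \Delta_n^-(v) + \|\sigma^{+,v,n}_v - \sigma^{-,v,n}_v\|_{TV},$$
where $\Delta_n^+(v) := \|f_n^{+,v,n}(\bplus)_v - \sigma^{+,v,n}_v\|_{TV}$ and analogously $\Delta_n^-(v)$. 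The rightmost term is at most $Ce^{-cn}$ by hypothesis~\eqref{eq:weak-mixing-cond}. Since $\sigma^{+,v,n} \le f_n^{+,v,n}(\bplus)$ under the coupling, identifying $S$ with an integer interval and applying Markov's inequality bounds $\Delta_n^+(v) \le \mathbb{E}[f_n^{+,v,n}(\bplus)_v - \sigma^{+,v,n}_v]$, and symmetrically for $\Delta_n^-(v)$. The problem thus reduces to a mixing-time estimate: one must show that $n$ rounds of the single-site dynamics on $\VV_{v,n}$ with extremal boundary conditions suffice to bring the expected disagreement with equilibrium to $e^{-n^{1-o(1)}}$.

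\textbf{Block-dynamics recursion.} We establish the mixing-time estimate via a block dynamics at a scale $\ell = \ell(L)$ to be tuned: one step of block dynamics picks a random block $\VV_{u,\ell} \subset \VV_{v,L}$ and resamples its configuration from $\rho_{\VV_{u,\ell}}^{\tau}$, where $\tau$ is the current outside configuration. The hypothesis~\eqref{eq:weak-mixing-cond} provides a spectral gap for this block chain on $\VV_{v,L}$ bounded below by a quantity depending on $L$ and $B(\ell)$. A single block update is in turn realizable (up to total-variation error $e^{-c\ell}$) by running the single-site dynamics inside $\VV_{u,\ell}$ for $T_\ell = \operatorname{poly}(\ell)$ rounds, thereby invoking the mixing-time estimate at scale $\ell$ recursively. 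Combining the two ingredients yields a doubling-style recursion
$$T_{\mathrm{mix}}(L) \le O(L/\ell) \cdot T_\ell + O(e^{-c\ell}),$$
where $T_{\mathrm{mix}}(L)$ denotes the number of rounds of single-site dynamics needed to drive the marginal at the center $e^{-cL}$-close to the equilibrium $\rho^{\bplus}_{\VV_{v,L}}$.

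\textbf{Iteration, volume growth, and main obstacle.} Iterating over dyadic scales $\ell_k$, the accumulated error is governed by $\sum_k B(\ell_k) e^{-c\ell_k}$. Under $B(r) = e^{o(r)}$, each summand is $e^{-\ell_k(1-o(1))}$ and the dominant term (with $\ell_k$ of order $n$) gives the stretched-exponential bound $e^{-n^{1-o(1)}}$. Under the stronger growth assumption $B(r) \le e^{\beta r/\log r}$ with $\beta < c\log 2$, consecutive ratios $B(\ell_{k+1}) e^{-c\ell_{k+1}} / (B(\ell_k) e^{-c\ell_k})$ are uniformly geometrically small, so the sum collapses to its first term and the exponential bound follows. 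The main technical difficulty lies in accommodating the infinite range of interactions: the classical Martinelli--Olivieri argument exploits finite-speed-of-propagation of single-site updates, which is unavailable here. We sidestep this by invoking~\eqref{eq:weak-mixing-cond} directly at every recursive step, replacing the deterministic range cutoff by the total-variation tail bound. Some care must be taken to ensure that the effective boundary conditions arising at each recursive step remain admissible (in $\Omega^+ \cup \Omega^-$) so that~\eqref{eq:weak-mixing-cond} continues to apply; this is handled using the couplings from Section~\ref{sec:MainCoupling}.
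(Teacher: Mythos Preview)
Your outline diverges from the paper's argument, and as written it has a real gap.

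The paper does \emph{not} pass through a block dynamics or any spectral-gap estimate. Instead it works directly with the disagreement probability
\[
\phi(n,r):=\max_{v}\Pr\!\big[f_n^{+,v,r}(\bplus)_v\neq f_n^{-,v,r}(\bminus)_v\big],
\]
and, using only monotonicity of the coupled dynamics together with weak spatial mixing, proves a \emph{quadratic} recursion of the form
\[
\phi(n+m,r+s)\le 2B(s)\,\phi(n,r)\,\phi(m,r+s)+2C|S|e^{-cs}+C|S|e^{-cr}.
\]
The mechanism is simple: after the first $n$ rounds the two chains agree on $\VV_{v,s}$ except with probability $\le B(s)\phi(n,r)$ (union bound); on that good event the next $m$ rounds are sandwiched between the $+$ and $-$ dynamics at scale $s$, whose equilibria are $Ce^{-cs}$-close by~\eqref{eq:weak-mixing-cond}. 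Setting $r=n=m$ gives $\psi(2n)\le 2B(s)\psi(n)^2+O(e^{-cs})$ for $s\le n$, and a separate calculus lemma (Lemma~\ref{lem:function-decay}) converts this into the stated stretched-exponential and exponential decay. No block chain, no comparison of Dirichlet forms.

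Your proposal instead invokes a block-dynamics spectral gap ``provided by~\eqref{eq:weak-mixing-cond}'' and then simulates block updates by single-site sweeps. Two concrete problems: (i) In the present infinite-range, upwards--downwards setting, the block resampling $\rho^{\tau}_{\VV_{u,\ell}}$ is only defined for $\tau\in\Omega^+\cup\Omega^-$, and the configuration produced by the running chain is not of this form; you flag this but do not resolve it, and the standard Martinelli--Olivieri path-coupling bound for the block chain genuinely relies on a finite interaction range to compare two boundary conditions differing at one site. (ii) The recursion you write, $T_{\mathrm{mix}}(L)\le O(L/\ell)\cdot T_\ell+O(e^{-c\ell})$, mixes a time with a probability and lacks the quadratic structure that is actually needed; even read charitably it is linear in $T_\ell$, which by itself cannot deliver the sharp exponential conclusion under the growth hypothesis $B(r)\le e^{\beta r/\log r}$ with $\beta<c\log 2$. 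The paper's squaring $\psi(n)\mapsto\psi(n)^2$ is precisely what makes that threshold appear.

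In short, the reduction step in your first paragraph is fine and matches the paper's decomposition into $\phi^+$, $\phi^-$, and the weak-mixing term, but the core recursive step should be replaced by the direct coupling argument above rather than a block-dynamics comparison.
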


	For the proof, we require the following calculus lemma, whose proof we postpone to the end of the section.
	
	\begin{lemma}\label{lem:function-decay}
		Let $\psi \colon \N \to (0,\infty)$ be monotone decreasing to zero and let $b \colon \N \to [0,\infty)$ be sub-linear. Suppose that, for some $C,c>0$,
		\[ \psi(2n) \le e^{b(s)} \psi(n)^2 + C e^{-cs} \qquad\text{for all }n \ge s \geq 1.\]
		Then $\psi(n)$ decays faster than any stretched-exponential, i.e., $\psi(n) \le \exp(-n^{1-o(1)})$. In addition, if $b(n) \le \frac{\beta n}{\log n}$ for some $\beta<c\log 2$ and all sufficiently large~$n$, then $\psi(n)$ decays exponentially fast.
	\end{lemma}

\begin{proof}[Proof of Theorem~\ref{thm:mixing}]
	For $n,r \ge 0$, define
	\[ \phi(n,r) := \max_{v \in \VV} \Pr[f_n^{+,v,r}(\bplus)_v \neq f_n^{-,v,r}(\bminus)_v]. \]
	Throughout the proof, we repeatedly use Lemma~\ref{lem:mono} without explicit mention; in particular, we use the fact that $\phi(n,r)$ is decreasing in both $n$ and $r$, as easily follows.

	It suffices to show that $\psi(n):=\phi(n,n)$ has the desired decay rate.
	Note that the weak spatial mixing assumption implies that $\mu^+=\mu^-$, which, together with irreducibility, implies that $\psi(n) \to 0$ as $n \to \infty$.
	Thus, the theorem will follow from Lemma~\ref{lem:function-decay} once we establish the following inequality:
	\[ \psi(2n) \leq \phi(2n,n + s) \le 2B(s) \psi(n)^2 + 3C|S|e^{-cs}  \qquad\text{for all } n \geq s \ge 0.\]
	In fact, we show the slightly stronger inequality: 
	\begin{equation}\label{eq:main-recursion}
	\phi(n+m,r+s) \le 2B(s) \phi(n,r) \phi(m,r+s) + 2C|S|e^{-cs} + C|S|e^{-cr}  \;\text{for all }n,m,s \ge 0 \text{ and } r \geq s .
	\end{equation}	
	The earlier inequality follows by setting $n = m = r$.

	Recall that $f_n^{\pm,v,r}$ is measurable with respect to the \iid\ process~$Y$.
	Let $\xi_v^+ \sim \rho^{\bplus}_{\VV_{v,r}}$ and $\xi_v^- \sim \rho^{\bminus}_{\VV_{v,r}}$ be a random variables, independent of $Y$, satisfying that $\xi_v^+ \ge \xi_v^-$ almost surely.
	Note that $f_n^{+,v,r}(\xi_v^+) \sim \rho^{\bplus}_{\VV_{v,r}}$ and $f_n^{-,v,r}(\xi_v^-) \sim \rho^{\bminus}_{\VV_{v,r}}$.
	By comparing $f_n^{+,v,r}(\bplus)_v$ to $f_n^{+,v,r}(\xi_v^+)_v$ and $f_n^{-,v,r}(\bminus)_v$ to $f_n^{-,v,r}(\xi_v^-)_v$ and using Lemma~\ref{lem:TV}, we see that
	\begin{equation}\label{eq:phi-plus-minus}
	\phi(n,r) \le \phi^+(n,r) + \phi^-(n,r) + |S| \cdot \max_{v \in \VV} \big\|\rho^{\bplus}_{\VV_{v,r}}(\sigma_v \in \cdot) - \rho^{\bminus}_{\VV_{v,r}}(\sigma_v \in \cdot)\big\|_{TV} ,
	\end{equation}
	where
	\begin{align*} \phi^+(n,r) & := \max_{v \in \VV} \Pr[f_n^{+,v,r}(\bplus)_v \neq f_n^{+,v,r}(\xi_v^+)_v], \\  \phi^-(n,r) & := \max_{v \in \VV} \Pr[f_n^{-,v,r}(\bminus)_v \neq f_n^{-,v,r}(\xi_v^-)_v] .\end{align*}
	Next, we now show that
	\begin{equation}\label{eq:recursion}
	 \phi^{\pm}(n+m,r+s) \le B(s) \phi(n,r) \phi(m,r+s) + |S| \cdot \max_{v \in \VV}  \big\|\rho^{\bplus}_{\VV_{v,s}}(\sigma_v \in \cdot) - \rho^{\bminus}_{\VV_{v,s}}(\sigma_v \in \cdot)\big\|_{TV} .
	 \end{equation}
	This statement will give~\eqref{eq:main-recursion} thanks to~\eqref{eq:weak-mixing-cond} and~\eqref{eq:phi-plus-minus}.	 
	We show~\eqref{eq:recursion} only for $\phi^+$ as the proof for $\phi^-$ is similar. Recall from \eqref{eq:f-def} that $f_{n+m}^{+,v,r+s}$ is the composition of $n+m$ independent copies of $\tilde{f}_1^{+,v,r+s}$. Letting $h_n^{+,v,r+s}$ be identical in distribution to $f_n^{+,v,r+s}$ and independent of $Y$ and $\xi_v^+$, we see that $f_m^{+,v,r+s} \circ h_n^{+,v,r+s}$ has the same distribution as $f_{n+m}^{+,v,r+s}$. Therefore,
	\begin{align*}
	   \phi^+(n+m,r+s)  = \max_{v \in \VV} \Pr\left[ f_m^{+,v,r+s}(h_n^{+,v,r+s}(\bplus))_v \neq f_m^{+,v,r+s}(h_n^{+,v,r+s}(\xi_v^+))_v \right], 
	\end{align*}
Now, letting $E_{v,u}$ denote the event that $h_n^{+,v,r+s}(\bplus)_u=h_n^{+,v,r+s}(\xi_v^+)_u$ and letting $E_v := \bigcap_{u \in \VV_{v,s}} E_{v,u}$,
	\begin{align*}
	 \phi^{\pm}(n+m,r+s)
	  &\le \max_{v \in \VV} \Pr(E_v^c) \cdot \Pr\left[ f_m^{+,v,r+s}(\bplus)_v \neq f_m^{+,v,r+s}(\bminus)_v \right] \\
	  &\quad+ \max_{v \in \VV} \Pr\left[ f_m^{+,v,s}(h_n^{+,v,r+s}(\xi_v^+))_v \neq f_m^{-,v,s}(h_n^{+,v,r+s}(\xi_v^+))_v \right].
	\end{align*}
This follows by two different types of monotonocity: if the configurations did not couple by time $n$, we may assume they take on their maximal difference. If they do, we may assume the  boundary conditions outside $\VV_{v,s}$ take on the worst possible state.

	For the first term, since $\VV_{u,r} \subset \VV_{v,r+s}$ for $u \in \VV_{v,s}$, we have for any $u \in \VV_{v,s}$ that
	\[ \Pr(E^c_{v,u}) \le \Pr\left[h_n^{+,v,r+s}(\bplus)_u \neq h_n^{-,v,r+s}(\bminus)_u\right] \le \Pr\left[h_n^{+,u,r}(\bplus)_u \neq h_n^{-,u,r}(\bminus)_u\right] \le \phi(n,r) ,\]
	so that
	\[ \Pr(E_v^c) \le \sum_{u \in \VV_{v,s}} \Pr(E^c_{v,u}) \le B(s) \phi(n,r) .\]
	For the second term, we note that $s \leq r$ implies that $\rho^{\bminus}_{\VV_{v,s}} \leD \rho^{\bminus}_{\VV_{v,r}} \leD \rho^{\bplus}_{\VV_{v,r}} \leD \rho^{\bplus}_{\VV_{v,s}}$. Since $\xi_v^+ \sim \rho^{\bplus}_{\VV_{v,r}}$, we see that
	\begin{align*}
	&f_m^{+,v,s}(h_n^{+,v,r+s}(\xi_v^+)) \le f_m^{+,v,s}(h_n^{+,v,s}(\xi_v^+)) \leD \rho^{\bplus}_{\VV_{v,s}} ,\\
	&f_m^{-,v,s}(h_n^{+,v,r+s}(\xi_v^+)) \ge f_m^{-,v,s}(h_n^{-,v,s}(\xi_v^+)) \geD \rho^{\bminus}_{\VV_{v,s}} .\end{align*}
	Thus, Lemma~\ref{lem:TV} gives that
	\[ \Pr\left[ f_m^{+,v,s}(h_n^{+,v,r+s}(\xi_v^+))_v \neq f_m^{-,v,s}(h_n^{+,v,r+s}(\xi_v^+))_v \right] \le |S| \cdot \big\|\rho^{\bplus}_{\VV_{v,s}}(\sigma_v \in \cdot) - \rho^{\bminus}_{\VV_{v,s}}(\sigma_v \in \cdot)\big\|_{TV} .\]
	Putting this together yields~\eqref{eq:recursion}.
\end{proof}

\begin{proof}[Proof of Lemma~\ref{lem:function-decay}]
	Denote $a_n := -\log \psi(n)$ and observe that the main assumption implies that
	\begin{equation}\label{eq:recursion-ineq}
	 a_{2n} \ge cs-\log (1+C) \qquad\text{for any }n \ge s \ge 1\text{ such that}\qquad cs+b(s) \le 2a_n .
	\end{equation}
	The restriction that $s \le n$ is a nuisance; to rid ourselves of it, we note that either $a_n \ge cn/2$ for infinitely many $n$, or, whenever $n$ is large, any solution to $cs+b(s) \le 2a_n$ satisfies $s \le n$ . In the former case, it is not difficult to check that $a_n = \Omega(n)$, so that $\psi(n)$ decays exponentially. We may therefore assume that this is not the case.
	
	We begin by showing that $\psi(n)$ decays faster than any stretched-exponential, that is, that $a_n$ grows faster than $n^\delta$ for any $0<\delta<1$.
	Since $b(s)$ is sub-linear by assumption, for any fixed $\epsilon>0$, we have $cs+b(s) \le 2x$ for all $s \le (2/c-\epsilon)x$ and large~$x$. Since $a_n \to \infty$ as $n \to \infty$, it follows from~\eqref{eq:recursion-ineq} that $a_{2n} \ge c(2/c-\epsilon)a_n - \log (1+C) \ge (2-\epsilon c-\epsilon)a_n$ for large $n$. We conclude that
	\[ \lim_{n \to \infty} a_{2^n} 2^{-\delta n} = \infty \qquad \text{for any }0<\delta<1. \]
	It is then straightforward to show that $a_n=n^{1-o(1)}$, establishing the first part of the lemma.
	
	Towards showing an exponential bound under the additional assumption on the growth rate of $b(n)$, let $\beta<c\log 2$ be such that $b(n) \le \frac{\beta n}{\log n}$. Let $\beta/c<\alpha<\log 2$. We claim that
\[ s \le \tfrac{2x}{c}(1-\tfrac{\alpha}{\log x}) \implies cs+b(s) \le 2x \qquad\text{for large }x .\]
	Indeed, since $s \mapsto cs+b(s)$ is increasing, this follows from
	\[ 2x(1-\tfrac{a}{\log x}) + \frac{\frac{2x\beta}{c}(1-\tfrac{\alpha}{\log x})}{\log \left[\tfrac{2x}{c}(1-\tfrac{\alpha}{\log x})\right]} = 2x(1-\tfrac{a}{\log x}) \cdot \left(1+ \frac{\beta}{c\log \left[\tfrac{2x}{c}(1-\tfrac{\alpha}{\log x})\right]}\right) \le 2x .\]
	Thus, by~\eqref{eq:recursion-ineq}, there exists $N$ such that
	\[ a_{2n} \ge 2a_n(1-\tfrac{\alpha}{\log a_n}) \qquad\text{for all }n \ge N .\]
	Let $0<\gamma<1-\frac{\alpha}{\log 2}$ and let $A>0$ be small enough so that $a_{2N} \ge AN / (\log N)^{1-\gamma}$.
	We prove by induction that
	\begin{equation*}
	a_n \ge \frac{An}{(\log n)^{1-\gamma}}  \qquad\qquad\text{for all }n \in \{N,2N,4N,8N,\dots \} .
	\end{equation*}
	Since $x \mapsto 2x(1-\frac{\alpha}{\log x})$ is increasing,
	\begin{align*}
	 a_{2n} \ge 2a_n\left(1-\frac{\alpha}{\log a_n}\right)
	  &\ge \frac{2An}{(\log n)^{1-\gamma}} \cdot \left(1- \frac{\alpha}{\log An- \log(\log n)^{1-\gamma}}\right) \\
	  &= \frac{2An}{(\log 2n)^{1-\gamma}} \cdot \frac{(\log 2n)^{1-\gamma}}{(\log n)^{1-\gamma}} \cdot \left(1- \frac{\alpha}{\log An- \log(\log n)^{1-\gamma}}\right) .
	\end{align*}
	The induction step now follows using that $(1-\gamma)\log 2 > \alpha$ and that
	\[ \frac{(\log 2n)^{1-\gamma}}{(\log n)^{1-\gamma}} = \left(1+\frac{\log 2}{\log n}\right)^{1-\gamma} \ge 1+\frac{(1-\gamma)\log 2}{\log n} .\]
	We conclude that
	\begin{equation}\label{eq:mixing-psi-decay1}
	\psi(n) \le \exp\left(-\frac{An}{(\log n)^{1-\gamma}}\right) \qquad\text{for all }n \in \{N,2N,4N,8N,\dots \} .
	\end{equation}
	Towards upgrading this bound to the desired exponential bound, define
	\[ \ell(n) := e^{\frac{n}{(\log n)^{1-\gamma/2}}} \psi(n) + e^{-cn/4 + \sqrt{n}} .\]
	Using the recursion assumption with $s=n$, we see that for large $n$,
	\begin{align*}
	\ell(2n)
	&= e^{\frac{2n}{(\log 2n)^{1-\gamma/2}}} \psi(2n) + e^{-cn/2 + \sqrt{2n}} \\
	&\le e^{\frac{2n}{(\log 2n)^{1-\gamma/2}}+\frac{cn}{\log n}} \psi(n)^2 + e^{\frac{2n}{(\log 2n)^{1-\gamma/2}}-cn} + e^{-cn/2 + \sqrt{2n}} \\
	&\le e^{\frac{2n}{(\log n)^{1-\gamma/2}}} \psi(n)^2 + e^{-cn/2 + \sqrt{2n} + 1} \\
	&\le \ell(n)^2 .
	\end{align*}
	Since~\eqref{eq:mixing-psi-decay1} implies that $\liminf_{n \to \infty} \ell(n)=0$, there exists $M \ge 1$ such that $\ell(M) \le 1/e$ and $\ell(2n) \le \ell(n)^2$ for all $n \ge M$. Then $\ell(2^nM) \le e^{-2^n}$ for all $n \ge 0$. Since $\max_{n \le m \le 2n} \ell(m) \le e^{n/(\log n)^{1-\gamma/2}} \ell(n)$, it easily follows that $\ell(n)$ decays exponentially fast, and we conclude that $\psi(n)$ also decays exponentially fast.
\end{proof}

\section{Space-time finitary codings}
\label{sec:space-time}
The goal of this section is to prove Theorem \ref{thm:fv}. As such, the graph $G$ will be an infinite quasi-transitive graph satisfying~\eqref{eq:sphere-condition}, $S$ will be finite (and identified with $\{0,1,\dots,|S|-1\})$, and $\rho$ will be a monotone $\Gamma$-invariant irreducible marginally finite upwards-downwards specification such that $\mu^+ = \mu^-$.

We will reuse the dynamics of Section \ref{sec:MainCoupling}, but will now demand that the spaces $\cA$ and $\cB$ are both finite. Before we construct suitable versions of $(\cA,\pi)$, $F$, $(\cB,\theta)$ and $\mathcal{O}$, let us outline the properties required to obtain a space-time finitary coding.

For any $v \in \VV$, define 
\[
T_v := \min\big\{ n : f_n^{+,v,n}(\bplus)_v = f_n^{-,v,n}(\bminus)_v \big\}.
\]
The assumption $\mu^+ = \mu^-$ implies that $\sigma^+ = \sigma^-$ (as was shown in the proof of Theorem \ref{thm:coding-for-specs}, item 2). Thus, from Corollary \ref{cor:ASconvergence}, we conclude that $T_v$ is almost surely finite, and, in particular,  
\[ \sigma^+_v = \sigma^-_v = f^{+,v,T_v}_{T_v}(\bplus) = f^{-,v,T_v}_{T_v}(\bminus) , \qquad v \in \Z^d. \]
With a finite-valued construction of $(\cA, \pi)$ and $(\cB,\theta)$, this does not allow us to conclude that the coding $\varphi$ is finitary (let alone space-time finitary). Indeed, $T_v$ does not bound the  coding radius as it is not necessarily a stopping time with respect to the filtration $(Y_{\VV_{v,r}})$. This is because the ordering $\preceq_{B_n}$ restricted to $\VV_{v,r}$ may depend on $\{B_{u,n}\}_{ u \not \in \VV_{v,r}}$.

To deal with this issue, for $\eta \in \cB^\VV$, we define 
\[
R_{u,v}(\eta) := \min \big\{r \ge 0 : \1_{\{u \preceq_{\eta} v\}}=\1_{\{u \preceq_{\eta'} v\}} \text{ for any } \eta' \text{ satisfying } \eta'_{\VV_{u,r}\cup\VV_{v,r}} = \eta_{\VV_{u,r}\cup\VV_{v,r}} \big\},
\]
where we again set the variable to $\infty$ if the set is empty. In words $R_{u,v}(\eta)$ is the minimal radius around $u$ and $v$ needed to determine the relative $\preceq_\eta$-order between $u$ and $v$. We now set 
\[
T^*_v := 2\min\big\{ n : f_n^{+,v,n}(\bplus)_v = f_n^{-,v,n}(\bminus)_v\text{ and }R_{u,w}(B_i) \le n\text{ for all }u,w \in \VV_{v,n}\text{ and }0 \le i \le n \big\}.
\]
The factor of 2 is introduced to accommodate the fact that $\VV_{u,n} \subset \VV_{v,2n}$ for $u \in \VV_{v,n}$.
Then $T^*_v$ is a bound on the coding radius at $v$. More importantly, it is a bound on the space-time coding radius, as one can easily see. Thus, the first part of Theorem~\ref{thm:fv} will follow once we can construct finite probability spaces $(\cA,\pi)$ and $(\cB,\theta)$ and their associated functions that imply $T^*_v$ is almost surely finite. The second part will require quantitative bounds on the tails of $T^*_v$, which will require the use of the mixing time results of the previous section.

\subsection{Constructing finite probability spaces}
We begin by choosing $(\cA, \pi)$ and $F$.
We set
\[ \cA := \big\{ \rho^{\omega}_v [ \sigma_v \leq s] \big\}_{\omega \in \Omega^+ \cup \Omega^-,~v\in \VV,~ s \in S}. \]
Since $S$ is finite and $\rho$ is marginally finite, we immediately see that $\cA$ is finite as well.
Thus, we may order the finite number of elements of $\cA$ in increasing order, $0\le a_1<\dots<a_m=1$, where $m := |\cA|$. Letting $a_0 := 0$, we define $\pi$ by
\[ \pi(\{a_i\}) :=  a_i - a_{i-1}, \qquad 1 \le i \le m . \]
We then define $F$ by
\[ F(\omega,v,a_i) := \min \{ s \in S :\rho^{\omega}_v [ \sigma_v \leq s] \ge a_i \} , \qquad \omega \in \Omega^+ \cup \Omega^-,~ 1 \le i \le m .\]
It is straightforward to check that \eqref{eq:FPlusDef}, \eqref{eq:FPlusMon} and~\eqref{eq:FPlusStabInvariant} hold, where we recall that, in all those equations, we allow $\omega \in \Omega^+ \cup \Omega^-$.

We now turn to choosing $(\cB,\theta)$ and $\cO$.
We set $\cB := \{1,\dots,D\}$ for some integer $D \ge 2$ and we set $\theta$ to be the uniform measure on $\cB$.
Given $\eta \in \cB^\VV$ and $v \in \VV$, define $Z_v(\eta)=(Z_{v,n}(\eta))_{n \ge 0} \in \N^\N$ by
\[
Z_{v,n}(\eta) := \sum_{u \in \VV_{v,n} \setminus \VV_{v,{n-1}}} \eta_u, 
\]
where it is understood that $Z_{v,0}(\eta) := \eta_v$.
We now define
\[
\cO(\eta,u,v) := \mathbf{1}_{Z_u(\eta) \leq Z_v(\eta)},
\]
where $\leq$ is used to indicate the lexicographical order on $\mathbb{N}^{\mathbb{N}}$. This creates a preorder $\preceq_\eta$ on $\VV$. The following lemma shows that $\cO$ is $\theta$-compatible, i.e., that $\preceq_\eta$ is almost surely a total ordering, when $(\eta_v)_{v \in \VV}$ are \iid\ samples of $\theta$.

\begin{lemma}\label{lem:finiteordering}
Let $G$ be an infinite quasi-transitive graph satisfying~\eqref{eq:sphere-condition}.
Then $\mathcal{O}$ is $\theta$-compatible and, letting $\eta=(\eta_v)_{v \in \VV}$ be \iid\ random variables sampled from $\theta$,
\[
\Pr(R_{u,v}(\eta) > r) \le D^{-r} \qquad\text{for any distinct } u,v \in \VV\text{ and }r \ge 0 .
\]
\end{lemma}
\begin{proof}
	Fix $u,v \in \VV$ distinct. Consider the event
	\[ A_n := \bigcap_{i=1}^n \{ Z_{u,i}(\eta) = Z_{v,i}(\eta) \} .\]
	Observe that $\mathcal{O}$ is $\theta$-compatible if and only if $\Pr(A_n) \to 0$ as $n \to \infty$.
	Observe also that $R_{u,v}(\eta)>n$ implies the occurrence $A_n$.
	Thus, the lemma will follow once we show that $\Pr(A_n \mid A_{n-1}) \le \frac1D$ for all $n \ge 1$.
	By~\eqref{eq:sphere-condition}, there exists some $w_n \in (\VV_{u,n} \setminus \VV_{u,n-1}) \setminus \VV_{v,n}$. Then
	\[ \Pr\big(A_n \mid \eta_{\VV \setminus \{w_n\}}\big) \le \max_{k \in \Z} \Pr(\eta_{w_n}=k) \le \tfrac1D .\]
	Since $A_{n-1}$ is measurable with respect to $\eta_{\VV \setminus \{w_n\}}$, it follows that $\Pr(A_n \mid A_{n-1}) \le \frac1D$.
\end{proof}

\subsection{Proof of Theorem~\ref{thm:fv}}
The first item of the theorem will follow once we show that $T^*_v$ is almost surely finite.
By monotonicity, $f^{+,v,n}_n(\bplus)_v = f^{-,v,n}_n(\bminus)_v $ for all $n \ge T_v$. Thus, since $T_v$ is almost surely finite, it suffices to show that $\Pr(E_n) \to 0$ as $n \to \infty$, where $E_n$ is the event that there exists $0 \le i \le n$ and a pair of vertices $u,w \in \VV_{v,n}$ for which $R_{u,w}(B_i) \ge n$.
Indeed, taking $D$ to be larger than $3\Delta^2$, where $\Delta$ is the degree of $G$, the union bound and Lemma~\ref{lem:finiteordering} allow us to conclude that
\[
\Pr[E_n] \le (n+1) B(n)^2 D^{-n} \le 10n \Delta^{2n} D^{-n} \le 2^{-n} \qquad \text{for any sufficiently large }n.
\]
This completes the proof of the first item of the theorem.

We now turn to the second item of the theorem.
Since, on the complement of $E_n$, $T_v \le n$ implies that $T^*_v \le 2n$, we see that
\begin{align*}
\Pr[T^*_v > 2n]
 \le \Pr[E_n] + \Pr[\big\{T^*_v > 2n \big\} \cap E_n^c]
 \le \Pr[E_n] + \Pr[T_v > n] .
\end{align*}
Thus, for large $n$,
\[ \Pr[T^*_v > 2n] \le 2^{-n} + |S| \cdot \big\|f_n^{+,v,n}(\bplus)_v - f_n^{-,v,n}(\bminus)_v\big\|_{TV} ,\]
and the theorem follows from Theorem \ref{thm:mixing}.

\section{Codings from finite-valued \iid\ processes} \label{sec:FiniteValued}

In this section, we prove Corollary~\ref{cor:fv-coding}. To this end, we require a result from~\cite{spinka2018finitaryising} which allows to convert space-time finitary codings with exponential tails to \fvffiid\ with stretched-exponential tails. 

Suppose that $Y=(Y_{v,i})_{v \in \Z^d, i \ge 0}$ are \iid\ random variables taking values in a finite set $\tilde{T}$.
Let $\zero$ be the origin of $\Z^d$, and $F=(F_n)_{n \ge 0}$ be a strictly increasing sequence of subsets of $\Z^d \times \N$ with $F_0:=\{(\zero,0)\}$, and consider the associated $\sigma$-algebras $\{\cF^n_v\}_{v \in \Z^d, n \ge 0}$ defined by
\begin{equation}\label{eq:F_n}
	\cF^n_v := \sigma\big(\{Y_{v+u,i}\}_{(u,i) \in F_n}\big) .
\end{equation}
An $\N$-valued random field $\tau=(\tau_v)_{v \in \Z^d}$ is said to be a \emph{$F$-stopping-process} for $Y$ if, for every $v$, $\tau_v$ is an almost surely finite stopping time with respect to the filtration $(\cF^n_v)_{n \ge 0}$. When we say that such a stopping-process is stationary, we shall mean that the same stopping rule is used at every vertex (rather than just meaning that its law is translation-invariant).
Given a $F$-stopping-process, we denote by $Y^\tau$ the random field
\[ Y^\tau := \big((Y_{v+u,i})_{(u,i) \in F_{\tau_v}}\big)_{v \in \Z^d} .\]
Note that $(Y^\tau)_v$ takes values in the finite-configuration space $\bigcup_{n \ge 0} \tilde{T}^{F_n}$.
We say that $F$ is \emph{linear} if
\begin{equation}\label{eq:linear-stopping-process}
\Delta_n := \max \big\{ \max\{|u|,i\} : (u,i) \in F_n\big\} \le \Delta n \qquad\text{for some $\Delta \ge 1$ and all $n \ge 0$.}
\end{equation}

\begin{proposition}[{\cite[Proposition~10]{spinka2018finitaryising}}]\label{prop:coding}
Let $Y=(Y_{v,i})_{v \in \Z^d, i \ge 0}$ be a finite-valued \iid\ process, let $F$ be linear and let~$\tau$ be a stationary $F$-stopping-process for $Y$. Suppose that $\tau_v$ has exponential tails and $\E |F_{\tau_v}| < M$ for some integer $M$. Then $Y^\tau$ is a translation-equivariant finitary factor of $((Y_{v,i})_{0 \le i < M})_{v \in \Z^d}$ with stretched-exponential tails.
\end{proposition}

\begin{proof}[Proof of Corollary~\ref{cor:fv-coding}]
	In the proof, all factors are translation-equivariant, i.e., $\Gamma$-factors where $\Gamma$ is the group of translations.

	Suppose first that $G=\Z^d$. In this case, $\mu^+$ is a measure on $S^{\Z^d}$.
    By Theorem~\ref{thm:fv}, there exists a space-time finitary coding $\varphi$ from an \iid\ process $Y=(Y_{v,i})_{v \in \Z^d,i \ge 0}$ to $\mu^+$ whose space-time coding radius $R^*$ has exponential tails. Let $R^*_v$ denote the space-time coding radius of the vertex $v \in \VV$.
	Towards applying Proposition~\ref{prop:coding}, define $F_n := \{ (u,i) : |u| \le n,~0 \le i \le n \}$ and note that the random field $\tau=(R^*_v)_{v \in \Z^d}$ is a stationary $F$-stopping-process for $Y$. Since $\tau_v$ has exponential tails, it follows that $\E |F_{\tau_v}|<\infty$. Note that, by definition of the process $Y^\tau$, there exists a deterministic function $\psi$ such that $\varphi(Y)_v = \psi((Y^\tau)_v)$ for all $v$ ($\psi$ may, in some sense, be thought of as $\varphi(\cdot)_\zero$). In particular, $\mu^+$ is a finitary factor of $Y^\tau$ with coding radius~0. It thus suffices to show that $Y^\tau$ is \fvffiid\ with stretched-exponential tails.	
	Indeed, letting $M$ be any integer larger than $\E |F_{\tau_v}|$, Proposition~\ref{prop:coding} yields that $Y^\tau$ is a finitary factor of $((Y_{v,i})_{0 \le i < M})_{v \in \Z^d}$ with stretched-exponential tails. Since the latter process is a finite-valued \iid\ process, this yields the required coding for $\mu^+$, completing the proof in the case of $G=\Z^d$.
	
	Suppose now that $G$ is the line graph of $\Z^d$. In this case, $\mu^+$ is a measure on $S^{E(\Z^d)}$ and the coding $\varphi$ obtained from Theorem~\ref{thm:fv} is from an \iid\ process $Y'=(Y'_{e,i})_{e \in E(\Z^d),i \ge 0}$.
Converting the problem to one on the vertices of $\Z^d$ is simple: Let $X'$ be sampled from $\mu^+$, and define $X=(X_v)_{v\in \Z^d}$ and $Y=(Y_{v,i})_{v \in \Z^d, i \ge 0}$ by $X_v = (X'_{e_1},\dots,X'_{e_d})$ and $Y_{v,i} = (Y'_{e_1,i},\dots,Y'_{e_d,i})$, where $e_j = \{v,v+e_j\}$ for $1\le j \le d$. Clearly, $X$ is a space-time finitary factor of $Y$ with space-time coding radius having exponential tails. The argument above now shows that, for some $M$, $X$ is a a finitary factor $((Y_{v,i})_{0 \le i < M})_{v \in \Z^d}$ with stretched-exponential tails.
Going back to the prime processes, we obtain that $X'$ is a finitary factor of $((Y'_{e,i})_{0 \le i < M})_{e \in E(\Z^d)}$ with stretched-exponential tails, as required. We note that we have relied very much on the fact that $\Gamma$ is the translation group (and not a larger group).
\end{proof}

\bibliographystyle{amsplain}
\bibliography{library}

\end{document}